\numberwithin{equation}{section}
\theoremstyle{plain}
\newtheorem{theorem}{Theorem}[section]
\newtheorem{lemma}[theorem]{Lemma}
\theoremstyle{remark}
\newtheorem{remark}[theorem]{Remark}
\theoremstyle{definition}
\newtheorem{definition}[theorem]{Definition}
\DeclareMathOperator{\Div}{div}
\newcommand{\abs}[1]{\lvert#1\rvert}
\newcommand{\norm}[1]{\lVert#1\rVert}
\newcommand{\norma}[2]{\norm{#1}_{#2}}
\newcommand{\SpDim}{N}
\newcommand{\numberspacefont}{\boldsymbol}
\newcommand{\R}{\numberspacefont{R}}
\newcommand{\RN}{\R^{\SpDim}}
\newcommand{\Pposbase}[3][*]{\ifthenelse{\equal{#1}{*}}%
{(#2)_{#3}}{\left(#2\right)_{#3}}}
\newcommand{\ppos}[1]{\Pposbase[*]{#1}{+}}
\newcommand{\di}{\,\textup{\textmd{d}}}
\newcommand{\eps}{\varepsilon}
\newcommand{\pder}[2]{\frac{\partial #1}{\partial #2}}
\newcommand{\emf}{G}
\newcommand{\grad}{\operatorname{\nabla}}
\DeclareMathOperator{\supp}{supp}
\newcommand{\der}[2]{\frac{\di #1}{\di #2}}
\newcommand{\Ka}{\mathcal{K}}
\newcommand{\smo}{\textup{\textmd{o}}}
\newcommand{\ew}{f}
\newcommand{\exw}{g}
\newcommand{\msw}{\mu_\ew}
\newcommand{\CH}{\varGamma}
\newcommand{\Har}{H_r}
\newcommand{\Kas}{K_s}
\newcommand{\fkf}{\varLambda}
\newcommand{\excf}{G}
\begin{document}
\title
[Equations with weights]{The Cauchy problem for doubly degenerate
  parabolic equations with weights}%
\author{Daniele Andreucci}
\address{Department of Basic and Applied Sciences for Engineering\\Sapienza University of Rome\\via A. Scarpa 16 00161 Rome, Italy}
\email{daniele.andreucci@uniroma1.it}
\thanks{The first author is member of the Gruppo Nazionale
  per la Fisica Matematica (GNFM) of the Istituto Nazionale di Alta Matematica
  (INdAM)}
\author{Anatoli F. Tedeev}
\address{Southern Mathematical Institute of VSC RAS\\53 Vatutina St. Vladikavkaz 362025, Russian Federation}
\email{a\_tedeev@yahoo.com}
\thanks{The second author  was supported by North-Caucasus Centre of Mathematical Research of the Vladikavkaz Scientific Centre of the Russian Academy of Sciences, agreement 075-02-2023-914.}
\thanks{Keywords: Doubly degenerate parabolic equation, exponentially growing weights, weighted Sobolev inequality, finite speed of propagation, time decay estimates.\\AMS Subject classification: 35K55, 35K65, 35B40.}

\date{\today}

\begin{abstract}
  We consider the Cauchy problem in the Euclidean space for a
  doubly degenerate parabolic equation with a space-dependent exponential weight, roughly speaking of the type of the exponential of a power of the distance from the origin. We assume here the solutions of the Cauchy problem to be globally integrable in space (in the appropriate weighted sense) and non-negative. Under suitable assumptions, we prove for the solutions sup estimates, i.e., the decay rate at infinity, the property of finite speed of propagation and
  support estimates. All our estimates are given explicitly in terms of the weight appearing in the equation.
\end{abstract}

\maketitle

\section{Introduction}
\label{s:intro}

Consider for $S_{T}=\RN\times (0,T)$, $0<T\le +\infty$, the Cauchy problem for a doubly degenerate weighted parabolic
equation
\begin{alignat}{2}
  \label{eq:pde}
  \ew(x)
  \pder{u}{t}
  -
  \Div\big(
  \ew(x)
  u^{m-1}
  \abs{\grad u}^{p-2}
  \grad u
  \big)
  &=
    0
    \,,
  &\qquad&
           \text{in $S_{T}$,}
  \\
  \label{eq:initd}
  u(x,0)
  &=
    u_{0}(x)
    \,,
  &\qquad&
           x\in\RN
           \,.
\end{alignat}
Here $x=(x_{1},\dots,x_{N})$, $\grad u$ [respectively, $\Div$] is the gradient [respectively, the divergence] with respect to $x$, and we denote
\begin{equation}
  \label{eq:expweight}
  \ew(x)
  =
  e^{\exw(\abs{x})}
  \,,
  \qquad
  x\in\RN
  \,.
\end{equation}
We assume throughout, often without further reference, that $u\ge 0$, $u_{0}\ge 0$, $u_{0}\ew\in L^{1}(\RN)$, that
\begin{equation}
  \label{eq:degen}
  1<p<N
  \,,
  \qquad
  p+m-3>0
  \,,
\end{equation}
and that $\exw\in C([0,+\infty])\cap C^{1}((0,+\infty))$ is such that $\exw(0)=0$, $\exw(s)>0$ for $s>0$, and for given $0<\alpha_{1}\le \alpha_{2}<\min(N,p/(p-1))$
\begin{equation}
  \label{eq:powerlike}
  \alpha_{1}
  \frac{\exw(s)}{s}
  \le
  \exw'(s)
  \le
  \alpha_{2}
  \frac{\exw(s)}{s}
  \,,
  \qquad
  s>0
  \,.
\end{equation}
For example the power $\exw(s)=s^{\alpha}$ for $\alpha$ as above satisfies our assumptions, as well as the Zygmund function
\begin{equation}
  \label{eq:zygmund}
  \exw(s)
  =
  s^{\alpha}
  [\log(c+s)]^{\beta}
  \,,
  \quad
  s\ge 0
  \,,
  c>1
  \,,
  \alpha>0
  \,,
  \beta>0
  \,;
\end{equation}
here $\alpha_{1}=\alpha$, $\alpha_{2}=\alpha+\beta$.

Next we give a formal definition of the notion of weak solution to
\eqref{eq:pde}--\eqref{eq:initd}. For an open domain $\Omega\subset\RN$, and $q\in[1,+\infty)$, we
denote by $W_{\ew}^{1,q}(\Omega)$ the weighted Sobolev space with norm
\begin{equation}
  \label{eq:Wf}
  \norma{w}{W_{\ew}^{1,q}(\Omega)}
  :=
  \Big(
  \int_{\Omega}
  (
  \abs{\grad w}^{q}
  +
  \abs{w}^{q}
  )
  \di\ew
  \Big)^{\frac{1}{q}}
  \,,
  \quad
  \di\ew
  =
  \ew(x)
  \di x
  \,.
\end{equation}
We often denote norms in $L^{q}(\RN)$ without reference to
the domain, as in $\norma{F}{q}=\norma{F}{L^{q}(\RN)}$.
The weighted spaces $L^{q}_{\ew}(\RN)$ are defined in the obvious way (similarly to \eqref{eq:Wf}).
We denote by $B_{R}$ the ball of center $0$ and radius $R$.

By means of the change of variable $v=u^{1/\beta}$, $\beta=(p-1)/(p+m-2)$ we transform \eqref{eq:pde} into
\begin{equation}
  \label{eq:pde_v}
  \ew(x)
  \pder{v^{\beta}}{t}
  -
  \beta^{p-1}
  \Div\big(
  \ew(x)
  \abs{\grad v}^{p-2}
  \grad v
  \big)
  =
  0
  \,,
  \qquad
  \text{in $S_{T}$.}
\end{equation}
\begin{definition}
  \label{d:weakon}
  We say that $u\ge 0$ is a weak solution to \eqref{eq:pde}--\eqref{eq:initd} if
  \begin{equation*}
    v=u^{1/\beta}
    \in
    C((0,T);L_{\ew}^{1+\beta}(\RN))
    \cap
    L_{\textup{loc}}^{p}
    ((0,T);W_{\ew}^{1,p}(\RN))
    \cap
    L_{\textup{loc}}^{\infty}(S_{T})
    \,,
  \end{equation*}
  and for any $0<t_{1}<t_{2}<T$ and any test function
  \begin{equation*}
    \eta
    \in
    W_{\textup{loc}}^{1,2}((0,T);L_{\ew}^{1+\beta}(\RN))
    \cap
    L_{\textup{loc}}^{p}
    ((0,T);W_{\ew}^{1,p}(\RN))
    \,,
  \end{equation*}
  we have
  \begin{equation}
    \label{eq:weakon_a}
    \begin{split}
      &\int_{\RN}
        [
        v^{\beta}(t_{2})
        \eta(t_{2})
        -
        v^{\beta}(t_{1})
        \eta(t_{1})
        ]
        \di\ew
      \\
      &\quad
        =
        \int_{t_{1}}^{t_{2}}
        \int_{\RN}
        \Big[
        -v^{\beta}
        \pder{\eta}{t}
        +
        \beta^{p-1}
        \abs{\grad v}^{p}
        \grad v
        \grad\eta
        \Big]
        \di\ew
        \di t
        =
        0
        \,.
    \end{split}
  \end{equation}
  Moreover $u$ takes the initial data in the sense that for any $\zeta\in C_{0}(\RN)$
  \begin{equation}
    \label{eq:weakon_b}
    \lim_{t\to 0}
        \int_{\RN}
        u(x,t)
        \zeta(x)
        \di\ew
        =
        \int_{\RN}
        u_{0}(x)
        \zeta(x)
        \di\ew
        \,.
  \end{equation}
\end{definition}

The existence of weak solutions to our Cauchy problem can be established by well-known techniques, see e.g., \cite{Andreucci:Tedeev:2022b}.

In this paper we address the following issues:
\\
In the case $\alpha_{2}<1$, we obtain a precise $\sup$ bound for solutions to \eqref{eq:pde} which are either radial or compactly supported (in space).
\\
For any $0<\alpha_{2}<\min(N,p/(p-1))$ we prove the property of finite speed of propagation, that is that solutions with a compactly supported initial data maintain their support bounded for all $t>0$, and we give an estimate of its size.

More exactly our results are contained in the following theorems.

\begin{theorem}
  \label{t:sur}
  Let $u$ be a non-negative radial weak solution of \eqref{eq:pde}--\eqref{eq:initd} in $S_{\infty}$. Assume also
  \begin{equation}
    \label{eq:sur_n}
    1>\alpha_{2}\ge \alpha_{1}\ge \frac{\alpha_{2}}{\alpha_{2}+1}
    \,.
  \end{equation}
  Then for large $t$
  \begin{equation}
    \label{eq:sur_nn}
    \norma{u(t)}{\infty}
    \le
    c
    \Big[
    \frac{
      \exw^{(-1)}(
      \log (t\norma{u_{0}\ew}{1}^{p+m-3})
      )^{p}
    }{
      \log (t\norma{u_{0}\ew}{1}^{p+m-3})
    }
    \Big]^{\frac{1}{p+m-3}}
    t^{-\frac{1}{p+m-3}}
    \norma{u_{0}\ew}{1}^{-1}
    \,.
  \end{equation}
Here $c=c(N,p,m,\alpha_{1},\alpha_{2},\exw(1))$.
\end{theorem}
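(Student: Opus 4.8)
The plan is to prove \eqref{eq:sur_nn} by a De Giorgi/Moser iteration carried out with respect to the weighted measure $\di\ew$, localized on a ball $B_{R}$ whose radius $R$ is chosen optimally at the end; the value $\exw^{(-1)}(\log(t\norma{u_{0}\ew}{1}^{p+m-3}))$ will be exactly that optimal $R$. The first ingredient is a weighted Sobolev (Faber--Krahn type) inequality: for $w$ supported in $B_{R}$,
\[
\Big(\int_{B_{R}}\abs{w}^{p\kappa}\di\ew\Big)^{1/\kappa}
\le
c\,\sigma(R)\int_{B_{R}}\abs{\grad w}^{p}\di\ew
\,,
\]
with $\kappa>1$ depending on $N,p$ and a weight factor $\sigma(R)$ of order $R^{p}\msw(B_{R})^{-\theta}$, up to lower powers of $\exw(R)$, for a suitable $\theta=\theta(N,p)\in(0,1)$. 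This follows from the unweighted Sobolev inequality together with the volume behaviour $\msw(B_{R})\sim R^{N}\exw(R)^{-1}e^{\exw(R)}$ for large $R$, which is a direct consequence of \eqref{eq:powerlike}; the restriction \eqref{eq:sur_n} on $\alpha_{1},\alpha_{2}$ is what makes $\sigma(R)$ behave well (in particular lets the lower order term be absorbed) through the optimization to come.

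Next come the energy estimates and the iteration. Working either with \eqref{eq:pde} directly or with the $p$-Laplacian form \eqref{eq:pde_v} for $v=u^{1/\beta}$, I would test with truncations $\ppos{u-k}$ cut off in $B_{R}$ and with powers $u^{s}$, obtaining Caccioppoli inequalities of the schematic type
\[
\frac{\di}{\di t}\int_{B_{R}}u^{1+s}\di\ew
+
c\int_{B_{R}}\abs{\grad\big(u^{(m+p+s-2)/p}\big)}^{p}\di\ew
\le
(\text{cut-off error terms})
\,,
\]
and then combine them with the weighted Sobolev inequality to run the standard iteration of $L^{r}$ norms on a shrinking nest of time intervals inside $(t/2,t)$ and balls inside $B_{R}$ (with, if needed, a preliminary step from $L^{1}$ to some $L^{r_{0}}$, $r_{0}>1$). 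The output is
\[
\norma{u(t)}{L^{\infty}(B_{R})}
\le
c\,\Big(\frac{\sigma(R)}{t}\Big)^{a}
\Big(\frac{1}{\msw(B_{R})}\int_{B_{R}}u(t/2)\di\ew\Big)^{b}
\]
with explicit $a,b>0$ depending only on $N,p,m$. Radiality (or compact support) is used here: one needs that essentially all the mass is captured inside $B_{R}$, which for compactly supported data is finite speed of propagation, and for radial solutions follows from a flux/energy estimate on $\abs{x}=R$ — the same device used for the second theorem — which does not require compact support. Combining with $\int_{\RN}u(t/2)\di\ew\le\norma{u_{0}\ew}{1}=:M$ gives the displayed bound with $M$ in place of the local mass.

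Finally, optimization in $R$. Substituting $\sigma(R)\sim R^{p}\msw(B_{R})^{-\theta}$ and $\msw(B_{R})\sim R^{N}\exw(R)^{-1}e^{\exw(R)}$, the right-hand side, as a function of $R$, is up to constants minimized when $e^{\exw(R)}\sim tM^{p+m-3}$, i.e. $\exw(R)\sim\log(tM^{p+m-3})$, that is $R\sim\exw^{(-1)}(\log(tM^{p+m-3}))$, which is admissible precisely for $t$ large. Plugging this value back in and collecting the remaining powers of $R$ and of $\exw(R)=\log(tM^{p+m-3})$ produces exactly the bracketed factor in \eqref{eq:sur_nn}, together with $t^{-1/(p+m-3)}$ and $M^{-1}$.

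I expect the main obstacle to be twofold. First, keeping all constants explicit and uniform through the doubly degenerate iteration (where both $p\ne2$ and $m\ne1$): the exponents $a,b$ must be tracked with care, and the absorption of the lower order term in the weighted Sobolev inequality is precisely what forces \eqref{eq:sur_n}. Second, and more delicate, is the localization step for a radial solution that is not a priori compactly supported — showing that the mass outside $B_{R(t)}$ is negligible, which is where the assumptions $\alpha_{2}<1$ and $\alpha_{1}\ge\alpha_{2}/(\alpha_{2}+1)$ genuinely enter.
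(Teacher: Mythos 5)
Your overall architecture (Caccioppoli estimates, De Giorgi iteration, a weighted Sobolev inequality, and an optimization that lands on $R\sim\exw^{(-1)}(\log(tM^{p+m-3}))$) is in the right family, but two of your key steps do not hold as stated, and they are precisely the points where the paper's proof is genuinely different. First, the Faber--Krahn inequality you postulate, with constant $\sigma(R)\sim R^{p}\msw(B_{R})^{-\theta}$, is false for exponential weights: since $\msw(B_{R})\sim R^{N}\exw(R)^{-1}e^{\exw(R)}$, your constant would contain a factor $e^{-\theta\exw(R)}$, i.e., it would be exponentially small in $\exw(R)$, and the subsequent optimization would then produce a decay rate strictly better than \eqref{eq:sur_nn}. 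The correct constant (Lemma~\ref{l:aux_bdd} for $W^{1,p}_{0}(B_{R})$ functions, or the $q$-dependent constant $\CH$ in \eqref{eq:aux_hardy_mm} of Lemma~\ref{l:aux_hardy} for radial functions on all of $\RN$) involves only powers of $\fkf(R)\sim\exw(R)/R$, respectively $\exw^{(-1)}(a)/a$ with $a=pq/(q-p)$, with no exponential factor at all; establishing this is the technical core of the paper and requires the substitution $w=ve^{\exw/p}$ together with the weighted Hardy--Poincar\'e inequality of Lemma~\ref{l:aux_poinc} and the monotonicity statements of Lemma~\ref{l:aux_emf}. This is also where \eqref{eq:sur_n} actually enters --- through \eqref{eq:aux_emf_nn} and \eqref{eq:aux_emf_p} --- not through ``absorbing a lower order term.''

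Second, your localization to $B_{R}$ for a radial solution that is \emph{not} compactly supported is a genuine gap: the iteration on $B_{R}$ only controls $\norma{u(t)}{L^{\infty}(B_{R})}$ and the cut-off error terms live near $\bdr{B_{R}}$, so you would need to show $u$ is already small outside $B_{R}$, and the ``flux/energy estimate on $\abs{x}=R$'' you invoke is not available (for radial data with unbounded support there is no finite speed of propagation to appeal to). The paper avoids localization entirely in the radial case: Lemma~\ref{l:aux_hardy} is a \emph{global} radial Sobolev inequality, and the factor $\exw^{(-1)}(\log t)^{p}/\log t$ is produced not by optimizing over $R$ but by optimizing over the Sobolev exponent $q$ (the ``logarithmic trick'': $r=\log t$, $q-p=1/r$, so $a\sim p\log t$ and $\CH\sim\exw^{(-1)}(a)/a$), combined with a sharp decay estimate for $\int u^{r}\di\ew$. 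Your $R$-optimization scheme is essentially the paper's proof of Theorem~\ref{t:bds} (compactly supported data, where $R$ is supplied by Theorem~\ref{t:fsp}), but it does not prove Theorem~\ref{t:sur} as stated.
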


\begin{theorem}
  \label{t:bds}
  Let $u$ be a non-negative weak solution of \eqref{eq:pde}--\eqref{eq:initd} in $S_{\infty}$. Assume also \eqref{eq:sur_n}, and that the initial data is compactly supported.
  \\
  Then for large $t$, the same bound as in \eqref{eq:sur_nn} is in force.
\end{theorem}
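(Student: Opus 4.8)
The plan is to run the argument of Theorem~\ref{t:sur}, with the finite speed of propagation property playing the part that radial symmetry plays there; this is legitimate precisely because the datum is now compactly supported, say $\supp u_{0}\subset B_{R_{0}}$. First I would invoke the finite speed of propagation estimate established in the paper: there is an explicit, slowly growing radius $R(t)$, defined implicitly by balancing the weighted measure $\wmeas{B_{R(t)}}$ against $t$ and $\norma{u_{0}\ew}{1}$, such that $\supp u(\cdot,t)\subset B_{R(t)}$ for every $t>0$. In the proof of Theorem~\ref{t:sur} the radial hypothesis enters only to confine the solution to such a ball and to make the tail contributions $\int_{\RN\setminus B_{R}}u\di\ew$ negligible in the energy estimates; here the inclusion $\supp u(\cdot,t)\subset B_{R(t)}$ does exactly the same job, so the rest of the proof can be transcribed essentially unchanged.

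With the solution localized, the core is an $L^{1}_{\ew}$–$L^{\infty}$ smoothing estimate obtained as in Theorem~\ref{t:sur}: passing to the transformed equation~\eqref{eq:pde_v}, one derives local energy (Caccioppoli) inequalities on cylinders $B_{2R(t)}\times(t/2,t)$, inserts them into the weighted Sobolev inequality, and iterates in De Giorgi–Moser style over a shrinking sequence of balls and truncation levels, using the mass control $\norma{u(\cdot,t)\ew}{1}\le\norma{u_{0}\ew}{1}$ for the space integrals that appear. This yields a bound of the shape $\norma{u(t)}{\infty}\le\Phi\big(\wmeas{B_{R(t)}},\norma{u_{0}\ew}{1},t\big)$ with $\Phi$ explicit. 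Substituting the chosen $R(t)$ and exploiting~\eqref{eq:powerlike} — which, read as a differential inequality, controls $\exw^{(-1)}$ under bounded multiplicative and (for large argument) additive perturbations of its argument, e.g.\ $\exw^{(-1)}(2L)\le c\,\exw^{(-1)}(L)$ — collapses $\Phi$ into the closed form~\eqref{eq:sur_nn}. The restriction to large $t$ serves to absorb the $O(1)$ additive terms in the logarithm produced by $R_{0}$ and by the starting time of the iteration, together with the fixed finite constant $\norma{u(\cdot,1)}{\infty}$ furnished by the local boundedness built into Definition~\ref{d:weakon}.

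I expect the principal difficulty to be the same one as in Theorem~\ref{t:sur}: tracking, through every step of the iteration, the deterioration of the constant in the weighted Sobolev inequality on $B_{2R(t)}$ as $R(t)\to\infty$, so that the output is not merely finite but carries the precise dependence displayed in~\eqref{eq:sur_nn} — this is where the doubly degenerate structure~\eqref{eq:degen} and the exponential weight interact most delicately. A secondary point needing care is the compatibility of the two halves of the argument: one must check that the radius $R(t)$ delivered by the finite speed of propagation estimate coincides, up to constants absorbed in $c$, with the radius that the smoothing iteration would select on its own, so that the support bound and the $\sup$ bound fit together with no loss; this is the structural role of the balance condition~\eqref{eq:sur_n} on $\alpha_{1},\alpha_{2}$.
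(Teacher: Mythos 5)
Your overall strategy is the paper's: localize the solution in $B_{R(t)}$ via the finite speed of propagation (Theorem~\ref{t:fsp}), rerun the De Giorgi/smoothing iteration of Theorem~\ref{t:sur}, and check that the radius delivered by the support estimate is compatible with the sup bound. However, the justification you give for why the radial argument transfers rests on a false premise. You write that in Theorem~\ref{t:sur} ``the radial hypothesis enters only to confine the solution to such a ball and to make the tail contributions negligible.'' That is not what radiality does there: radial solutions are not confined to balls, and no tail terms are discarded. Radiality is used exclusively to invoke the one-dimensional Hardy--Sobolev inequality of Lemma~\ref{l:aux_hardy}, i.e.\ \eqref{eq:aux_hardy_m}, which is proved only for functions of $r=\abs{x}$. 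For a non-radial solution that inequality is simply unavailable, so the proof cannot be ``transcribed essentially unchanged''; one must replace it. The paper's substitute is Lemma~\ref{l:aux_bdd}: for $v\in W^{1,p}_{0}(B_{R})$ the same inequality holds with $\CH$ replaced by $C\fkf(R)^{N/a-1}$, and this is exactly where compact support (guaranteed at time $t$ by Theorem~\ref{t:fsp}) enters. You do sense the issue when you flag ``the deterioration of the constant in the weighted Sobolev inequality on $B_{2R(t)}$'' as the main difficulty, but you neither identify the inequality you would use nor resolve that difficulty.

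The resolution is a concrete computation you have not supplied, and it is the substance of the proof. With $R=c\,\exw^{(-1)}(\log t)$, every occurrence of $\CH$ in \eqref{eq:sur_xx} becomes $C\fkf(R)^{N/a-1}$; since $a\to+\infty$ under the logarithmic choice $r=\log t$, $q-p=1/r$, one needs $\fkf(R)<1$ for large $t$ (this is where $\alpha_{2}<1$ is used again) to bound the resulting exponent by $-p/(p+m-3)$, and then \eqref{eq:aux_emf_prpos} gives $\fkf(\exw^{(-1)}(\log t))^{-1}\le c\,\exw^{(-1)}(\log t)/\log t$, which is precisely what turns the extra factor into $[\exw^{(-1)}(\log t)^{p}/\log t]^{1/(p+m-3)}$ and reproduces \eqref{eq:sur_nn}. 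Without Lemma~\ref{l:aux_bdd} and this bookkeeping, your argument does not close.
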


\begin{theorem}
  \label{t:fsp}
  Let $u$ be a non-negative weak solution of \eqref{eq:pde}--\eqref{eq:initd} in $S_{\infty}$. Assume also that the initial data is compactly supported.
  \\
  Then for all $t>0$ we have $\supp u(t)\subset B_{R(t)}$, where
  \begin{equation}
    \label{eq:fsp_n}
    R(t)
    =
    c
    \exw^{(-1)}
    \big(
    \log(e+t\norma{u_{0}\ew}{1}^{p+m-3})
    \big)
    \,.
  \end{equation}
  Here $c=c(N,p,m,\alpha_{1},\alpha_{2},\exw(1),R_{0})$.
\end{theorem}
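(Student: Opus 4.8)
The plan is to work with the transformed equation \eqref{eq:pde_v} for $v = u^{1/\beta}$ and establish the support estimate by a local energy / iteration argument à la Antontsev–Díaz–Shmarev and the authors' previous work \cite{Andreucci:Tedeev:2022b}, carefully tracking the exponential weight $\ew$. First I would fix a time level $t>0$ and, for a radius $\rho$ to be chosen large (eventually $\rho\sim R(t)$), test \eqref{eq:weakon_a} with a test function of the form $\eta = v\, \zeta(\abs{x})^{\lambda}$ where $\zeta$ is a smooth cutoff vanishing for $\abs{x}\le\rho$ and equal to $1$ for $\abs{x}\ge\rho+\sigma$, together with a time cutoff on $(0,t)$. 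This produces, after absorbing the lower-order terms, an energy inequality of the form
\begin{equation*}
  \sup_{0<\tau<t}
  \int_{\abs{x}>\rho+\sigma}
  v^{1+\beta}(\tau)\di\ew
  +
  \int_{0}^{t}\int_{\abs{x}>\rho+\sigma}
  \abs{\grad v}^{p}\di\ew\di\tau
  \le
  \frac{c}{\sigma^{p}}
  \int_{0}^{t}\int_{\abs{x}>\rho}
  v^{p}\di\ew\di\tau
  \,,
\end{equation*}
valid once $\rho$ exceeds the (bounded) support of $u_{0}$, so the data term drops out. The essential point in handling the weight is that, by \eqref{eq:powerlike}, $\ew$ is "locally almost constant" on dyadic shells at distance $\sim\rho$, so the weight can be treated as a harmless multiplicative constant on each shell at the cost of factors depending only on $\alpha_{1},\alpha_{2}$; this is where assumption \eqref{eq:powerlike} (and the fact that $\exw$ is increasing, so $\ew$ grows with $\abs{x}$) is used.

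Next I would interpolate the right-hand side. Using the weighted Sobolev / Gagliardo–Nirenberg inequality on the exterior domain (the weighted embedding underlying Definition \ref{d:weakon}, applied on shells so that the weight is again essentially constant), I bound $\int v^{p}$ in terms of a product of the energy quantities on the left with small exponents and an $L^{1+\beta}$-type mass term with exponent strictly less than $1$, exploiting the degeneracy condition $p+m-3>0$ (equivalently $p>1+\beta$). This yields a nonlinear recursion between the energy $E(\rho)$ on $\{\abs{x}>\rho\}$ and $E(\rho+\sigma)$: schematically $E(\rho+\sigma)\le c\,\sigma^{-p} \,t^{\theta}\, E(\rho)^{1+\delta}$ with $\delta>0$. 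A standard fast-geometric-convergence lemma (choosing $\sigma = \rho_{0}2^{-k}$ over a dyadic sequence of radii) then shows $E(\rho_{0})=0$ — hence $v\equiv 0$, i.e. $\supp u(t)\subset B_{\rho_{0}}$ — provided the starting energy $E(\rho_{0}/2)$ is small relative to a threshold involving $t$ and the structural constants.

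The final and genuinely delicate step is the calibration of $\rho_{0}$ to get the explicit form \eqref{eq:fsp_n}. The global mass conservation $\int u(\tau)\di\ew = \norma{u_{0}\ew}{1}$ (which follows from \eqref{eq:weakon_a} with $\eta\equiv1$) controls the starting energy $E(\rho_{0}/2)$; but the crucial gain is that on $\{\abs{x}>\rho_{0}/2\}$ the weight $\ew(x)\ge e^{\exw(\rho_{0}/2)}$, so the "visible" mass there is at most $\norma{u_{0}\ew}{1}\, e^{-\exw(\rho_{0}/2)}$ in a suitably weighted norm — an exponentially small quantity. Balancing this exponentially small mass against the threshold (which is polynomial in $t$ and $\norma{u_{0}\ew}{1}$) forces $\exw(\rho_{0})\sim\log(e+t\norma{u_{0}\ew}{1}^{p+m-3})$, i.e. $\rho_{0}\sim\exw^{(-1)}(\log(e+t\norma{u_{0}\ew}{1}^{p+m-3}))$, which is exactly \eqref{eq:fsp_n}; here \eqref{eq:powerlike} is used once more to invert $\exw$ with controlled constants and to absorb the factor $\rho_{0}^{-p}$ and the powers of $t$ coming from the interpolation into the logarithm (they only perturb $c$). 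The main obstacle I anticipate is making the weight uniformly comparable to a constant on each shell while keeping all exponents in the iteration independent of $\rho_{0}$ and $t$, so that the geometric-convergence lemma applies with a threshold that depends on $t$ only through an explicit power — it is precisely the two-sided bound \eqref{eq:powerlike} that makes this possible, and also the reason $\alpha_{2}<\min(N,p/(p-1))$ (but \emph{not} $\alpha_{2}<1$) is needed here, in contrast to Theorems \ref{t:sur}–\ref{t:bds}.
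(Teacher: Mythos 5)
Your overall architecture (local energy inequality on exterior annuli with the data term vanishing, Gagliardo--Nirenberg interpolation of the lower-order term, a fast-geometric-convergence iteration, and a final calibration of the radius against conservation of mass) is the same as the paper's. But there is a genuine gap in your treatment of the weight. You assert that by \eqref{eq:powerlike} the weight $\ew$ is ``locally almost constant'' on dyadic shells at distance $\sim\rho$, up to factors depending only on $\alpha_{1},\alpha_{2}$. This is false: \eqref{eq:powerlike} makes the \emph{exponent} $\exw=\log\ew$ comparable across a dyadic shell (e.g.\ $\exw(2\rho)\le 2^{\alpha_{2}}\exw(\rho)$), but the weight itself varies by a factor of order $\exp\bigl((2^{\alpha_{2}}-1)\exw(\rho)\bigr)$, which blows up as $\rho\to\infty$. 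This is not a technicality one can absorb into $c$: the whole theorem rests on a competition between an exponential \emph{gain} $\ew(R/4)^{-(p+m-3)}$, produced by the fact that in the weighted Gagliardo--Nirenberg step the low-order norm carries the weight only to the power $\eps/p<1$ (see \eqref{eq:fsp_vvv}--\eqref{eq:fsp_vi}), and an exponential \emph{loss} $\ew(2R)^{s-1}$ coming from the H\"older step that converts the $L^{s}$ output of the De~Giorgi iteration back into the $L^{1}$ mass (see \eqref{eq:fsp_jjj}). The net exponent of $\exw(R)$ is positive only if the De~Giorgi exponent $s$ is chosen close to $1$ \emph{depending on} $\alpha_{2}$ (the paper takes $s=1+\tfrac12 8^{-\alpha_{2}}(p+m-3)$ in \eqref{eq:fsp_vj}--\eqref{eq:fsp_vjj}). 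Your proposal fixes the energy exponent a priori (testing with $\eta=v\zeta^{\lambda}$, i.e.\ an $L^{1+\beta}$ energy) and never confronts this competition; if you literally treated the weight as a constant per shell, both the gain and the loss would disappear from view and the calibration of $R(t)$ could not be justified.

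A second, related imprecision: your ``exponentially small visible mass'' step conflates weighted and unweighted masses. Conservation of mass controls $\int u\,\di\ew$, which is \emph{not} small on $\{\abs{x}>\rho_{0}/2\}$; the iteration's mass term, as it comes out of the weighted interpolation, is the weighted one (cf.\ $Y_{i}=\sup_{\tau}\int_{D_{i}}u\,\di\ew$ in the paper). The exponential smallness that ultimately forces $\exw(R)\gtrsim\log\bigl(e+t\norma{u_{0}\ew}{1}^{p+m-3}\bigr)$ is the residual factor $e^{-\nu\exw(R)}$ left over after the gain/loss competition above, not a Chebyshev bound on the tail mass. Your final balancing happens to land on the right answer, but the mechanism producing the exponential factor is not the one you describe, and without choosing $s$ (or your $\lambda$) small in terms of $\alpha_{2}$ the argument does not close.
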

The assumptions of Theorems \ref{t:sur} and \ref{t:bds} may be weakened, see Remark~\ref{r:aux_alpha}.

\subsection{Example: power function}
\label{s:example1}

Assume here $\exw(s)=s^{\alpha}$, $\alpha>0$.
\\
If $\alpha\in (0,1)$, then the $\sup$ bound \eqref{eq:sur_nn} amounts to
\begin{equation}
  \label{eq:example1_a}
  \norma{u(t)}{\infty}
  \le
  c
  [\log (t\norma{u_{0}\ew}{1}^{p+m-3})
  )]^{\frac{p-\alpha}{\alpha(p+m-3)}}
  t^{-\frac{1}{p+m-3}}
  \norma{u_{0}\ew}{1}^{-1}
  \,.
\end{equation}
For any $\alpha>0$, the definition \eqref{eq:fsp_n} becomes
\begin{equation}
  \label{eq:example1_b}
  R(t)
  =
  c
  \big[
  \log(e+t\norma{u_{0}\ew}{1}^{p+m-3})
  \big]^{\frac{1}{\alpha}}
  \,.
\end{equation}

\subsection{Example: Zygmund function}
\label{s:example2}

Let $\exw$ be as in \eqref{eq:zygmund}. Let $\tau=\exw(s)$. Then
\begin{equation}
  \label{eq:example2_a}
  s
  =
  \exw^{(-1)}(\tau)
  =
  \alpha^{\frac{\beta}{\alpha}}
  \tau^{\frac{1}{\alpha}}
  (\log\tau)^{-\frac{\beta}{\alpha}}
  A(\tau)
  \,,
  \qquad
  \tau\to+\infty
  \,,
\end{equation}
where $A(\tau)=1+\smo(1)$.
Indeed, let us identify $A(\tau)$ by applying $\exw$ to both sides of \eqref{eq:example2_a}; we obtain
\begin{equation}
  \label{eq:example2_b}
  \begin{split}
    \tau
    &=
      \alpha
      \tau
      (\log \tau)^{-\beta}
      A(\tau)^{\alpha}
      \\
    &\quad\times
      \Big[
      \frac{\beta}{\alpha}
      \log\alpha
      +
      \frac{1}{\alpha}
      \log \tau
      -
      \frac{\beta}{\alpha}
      \log\log\tau
      +
      \log A(\tau)
      \Big]^{\beta}
      \\
    &=
      \tau
      A(\tau)^{\alpha}
      \Big[
      1
      +
      \smo(1)
      +
      \alpha
      \frac{\log A(\tau)}{\log\tau}
      \Big]^{\beta}
      \,,
  \end{split}
\end{equation}
whence necessarily $A(\tau)\to 1$ as $\tau\to+\infty$.

Let us assume for the sake of notational simplicity that $\norma{u_{0}\ew}{1}=1$. Then, if $\alpha_{2}=\alpha+\beta<1$ and
\begin{equation*}
  \alpha
  >
  \frac{\alpha+\beta}{\alpha+\beta+1}
  \,,
\end{equation*}
it follows from \eqref{eq:sur_n} that
\begin{equation}
  \label{eq:example2_sup}
  \norma{u(t)}{\infty}
  \le
  c
  \Big[
  \frac{1}{\log t}
  \Big(
  \frac{
    \log t
  }{
    (\log\log t)^{\beta}
  }
  \Big)^{\frac{p}{\alpha}}
  \Big]^{\frac{1}{p+m-3}}
  t^{-\frac{1}{p+m-3}}
  \,.
\end{equation}
In addition, for any $\alpha$, $\beta>0$, \eqref{eq:fsp_n} yields for large $t$
\begin{equation}
  \label{eq:example2_c}
  R(t)
  \le
  c
  \Big(
  \frac{
    \log t
  }{
    (\log\log t)^{\beta}
  }
  \Big)^{\frac{1}{\alpha}}
  \,.
\end{equation}

\begin{remark}
  \label{r:pme}
  If $p=2$, estimate \eqref{eq:example1_a} is in agreement with the
  one obtained in \cite{Muratori:Roncoroni:2022} for the porous media equation in Cartan-Hadamard manifolds. See also the related papers \cite{Grillo:Muratori:2016}, \cite{Grillo:Muratori:Vazquez:2017}, \cite{Muratori:2021} and references therein. However, it seems to us that the estimate \eqref{eq:example2_sup} is new even for the porous media equation.
\end{remark}

\begin{remark}
  \label{r:partic}
  In the case $m=1$, $\exw(s)=s^{\alpha}$, Theorem~\ref{t:fsp} was proved in \cite{Sanikidze:Tedeev:2010}. The case $\alpha_{1}=\alpha_{2}=1$ was considered in \cite{Tedeev:2023}.
\end{remark}

Weighted equations like \eqref{eq:pde} play an important role in Riemannian geometry, see \cite{Bakry:Emery:1985}, \cite{Grigoryan:1999b}, \cite{Grigoryan:2006}. Note that the precise form of exponentially weighted Sobolev inequalities is not yet completely understood, in spite of their relevance to the study of qualitative asymptotic behavior of solutions to diffusion equations. Such inequalities are connected with isoperimetric inequalities (see \cite{Alvino:etal:2017}, \cite{Andreucci:Tedeev:2021c}, \cite{Betta:Brock:Mercaldo:Posteraro:1999}, \cite{Brandolini:Chiacchio:2023}, \cite{Chambers:2019}, \cite{Figalli:Maggi:2013}, \cite{Fusco:LaManna:2023}, \cite{Kolesnikov:Zhdanov:2011} and references therein).
\\
In the study of the porous media equation in Cartan-Hadamard manifolds carried out in \cite{Grillo:Muratori:2016}, \cite{Grillo:Muratori:Vazquez:2017}, \cite{Muratori:Roncoroni:2022},  one of the main ingredients is a precise version of a weighted Sobolev inequality connected with the volume growth rate, depending on the behavior of sectional curvature. As it was shown there the most non-standard cases are connected with manifolds with exponentially growing volume. In that case classical approaches fail to prove suitable weighted Sobolev inequalities. The first result in this direction \cite{Grillo:Muratori:2016} deals with Cartan-Hadamard manifolds admitting exponentially weighted global Poincar\'{e} inequalities. Later in \cite{Muratori:Roncoroni:2022} such results were extended for various classes of Cartan-Hadamard manifolds. We also recall that, still in the setting of the Cauchy problem for the porous media equation in Cartan-Hadamard manifolds with exponential volume growth, in \cite{Grillo:Muratori:Vazquez:2017} the authors obtained the complete classification of time decay rates, by employing self-similar sub- and super-solutions.
\\
The radial Sobolev inequality obtained in \cite{Muratori:Roncoroni:2022} is enough to get precise $\sup$ estimates in Cartan-Hadamard manifolds. The radial inequality we prove here (Lemma~\ref{l:aux_hardy}) is a natural generalization of the one in \cite{Muratori:Roncoroni:2022}; also our proof follows their approach. The extension of Theorem~\ref{t:sur} to the non-radial case is an open problem. In this connection, here we consider the case of compactly supported solutions, proving the new Sobolev inequality in Lemma~\ref{l:aux_bdd} which enables us to prove Theorem~\ref{t:bds}. 
\\
Weighted parabolic equations were studied also in \cite{Andreucci:Cirmi:Leonardi:Tedeev:2001}, \cite{Andreucci:Tedeev:2021b}, \cite{Andreucci:Tedeev:2022b}, \cite{Grigoryan:2006}, \cite{Grigoryan:Surig:2024}, \cite{Ohya:2004}, \cite{Tedeev:2007}, \cite{Tedeev:2023}.

In the proofs of Theorems \ref{t:sur} and \ref{t:bds} we use the same DeGiorgi approach as in \cite{Andreucci:Tedeev:2005}, \cite{Andreucci:Tedeev:2015}, while in the proof of the finite speed of propagation, i.e., of Theorem~\ref{t:fsp}, we follow the approach in \cite{Andreucci:Tedeev:1999}, \cite{Andreucci:Tedeev:2021b}. The latter approach needs only the Poincar\'{e} type estimate in Lemma~\ref{l:aux_poinc}, which allows us to skip the assumption $\alpha_{2}<1$ in this case.
\\
Hopefully the results of this paper can be applied to the qualitative investigation of doubly non-linear degenerate parabolic problems in Cartan-Hadamard manifolds.

The paper is organized as follows: Section~\ref{s:aux} is devoted to auxiliary results and functional inequalities; Section~\ref{s:sur} [respectively \ref{s:bds}, \ref{s:fsp}] contains the proof of Theorem~\ref{t:sur} [respectively \ref{t:bds}, \ref{t:fsp}]; note that Theorem~\ref{t:fsp} is used in the proof of Theorem~\ref{t:bds}, but its proof is of course completely independent of the latter Theorem.

\section{Auxiliary results}
\label{s:aux}

We remark first the following elementary consequences of \eqref{eq:powerlike}:
\begin{gather}
  \label{eq:aux_powerlike_inv}
  \exw^{(-1)}(z)
  \lambda^{\frac{1}{\alpha_{2}}}
  \le
  \exw^{(-1)}(z\lambda)
  \le
  \exw^{(-1)}(z)
  \lambda^{\frac{1}{\alpha_{1}}}
  \,,
  \quad
  z>0
  \,,
  \lambda>1
  \,,
  \\
  \label{eq:aux_powerlike_inv2}
  \exw^{(-1)}(z)
  \lambda^{\frac{1}{\alpha_{1}}}
  \le
  \exw^{(-1)}(z\lambda)
  \le
  \exw^{(-1)}(z)
  \lambda^{\frac{1}{\alpha_{2}}}
  \,,
  \quad
  z>0
  \,,
  \lambda<1
  \,.
\end{gather}
With the purpose of introducing a more regular version of $\exw$, we define the function
\begin{equation*}
  \emf(s)
  =
  \frac{1}{s}
  \int_{0}^{s}
  \exw(z)
  \di z
  \,,
  \qquad
  s>0
  \,.
\end{equation*}
We have
\begin{align}
  \label{eq:aux_emf_prime}
  \emf'(s)
  &=
    -
    \frac{1}{s^{2}}
    \int_{0}^{s}
    \exw(z)
    \di z
    +
    \frac{\exw(s)}{s}
    \,,
  \\
  \label{eq:aux_emf_second}
  \emf''(s)
  &=
    \frac{1}{s^{2}}
    \Big(
    \frac{2}{s}
    \int_{0}^{s}
    \exw(z)
    \di z
    -
    2\exw(s)
    +
    s\exw'(s)
    \Big)
    \,.
\end{align}
According to our assumption \eqref{eq:powerlike}, we see by direct differentiation that
\begin{equation}
  \label{eq:aux_non_non}
  \begin{aligned}
    &\text{$s\mapsto \exw(s)s^{-\alpha_{1}}$ is non-decreasing and}
      \\
    &\text{$s\mapsto \exw(s)s^{-\alpha_{2}}$ is non-increasing in $(0,+\infty)$.}
  \end{aligned}
\end{equation}
 Thus we have
\begin{equation}
  \label{eq:aux_emf_int}
  \begin{split}
    &\frac{\exw(s)s}{\alpha_{2}+1}
    =
    \exw(s)s^{-\alpha_{2}}
    \frac{s^{\alpha_{2}+1}}{\alpha_{2}+1}
    \le
    \int_{0}^{s}
    \exw(z)z^{-\alpha_{2}}
    z^{\alpha_{2}}
    \di z
    =
    \int_{0}^{s}
    \exw(z)
    \di z
    \\
    &\quad=
    \int_{0}^{s}
    \exw(z)z^{-\alpha_{1}}
    z^{\alpha_{1}}
    \di z
    \le
    \exw(s)s^{-\alpha_{1}}
    \frac{s^{\alpha_{1}+1}}{\alpha_{1}+1}
    =
    \frac{\exw(s)s}{\alpha_{1}+1}
    \,.
  \end{split}
\end{equation}
As a first consequence of \eqref{eq:aux_emf_int} and of \eqref{eq:aux_emf_prime} we infer for all $s>0$
\begin{equation}
  \label{eq:aux_emf_prpos}
  0
  <
  \frac{\alpha_{1}}{\alpha_{1}+1}
  \frac{\exw(s)}{s}
  \le
  \emf'(s)
  \le
  \frac{\alpha_{2}}{\alpha_{2}+1}
  \frac{\exw(s)}{s}
  \,.
\end{equation}

\begin{lemma}
  \label{l:aux_emf}
  1) Assume
    \begin{equation}
    \label{eq:aux_emf_nn}
    \frac{(N-p)\alpha_{1}}{\alpha_{1}+1}
    +
    (p-1)
    \Big(
    \alpha_{1}
    -
    \frac{\alpha_{2}}{\alpha_{2}+1}
    \Big)
    \ge
    0
    \,.
  \end{equation}
  Then we have
  \begin{equation}
    \label{eq:aux_emf_n}
    \der{}{s}
    \big(
    \emf'(s)^{p-1}
    s^{N-1}
    \big)
    \ge 0
    \,,
  \end{equation}
  as well as
  \begin{equation}
    \label{eq:aux_emf_m}
    \der{}{s}
    \big(
    \emf'(s)^{-1}
    s^{-\frac{N-1}{p-1}}
    \big)
    \le 0
    \,.
  \end{equation}
  2) If instead we assume
  \begin{equation}
    \label{eq:aux_emf_p}
    \frac{(N+1)\alpha_{1}}{\alpha_{1}+1}
    \ge
    \alpha_{2}
    \,,
  \end{equation}
  we have
  \begin{equation}
    \label{eq:aux_emf_pp}
    \der{}{s}
    \big(
    \emf'(s)^{-1}
    s^{N-1}
    \big)
    \ge
    0
    \,.
  \end{equation}
\end{lemma}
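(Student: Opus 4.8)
The plan is to handle all three inequalities by the same device: differentiate the product in question, factor out the (sign-definite) prefactor, and reduce the claim to a pointwise sign condition for a linear combination of $s\emf''(s)$ and $\emf'(s)$, which is then settled by the elementary bounds already established. The only identity needed is
\begin{equation*}
  s\emf''(s)=\exw'(s)-2\emf'(s)\,,\qquad s>0\,,
\end{equation*}
which follows from \eqref{eq:aux_emf_second} upon substituting $\tfrac1s\int_0^s\exw(z)\di z=\emf(s)$ and $s\emf'(s)=\exw(s)-\emf(s)$ (the latter being \eqref{eq:aux_emf_prime} multiplied by $s$); equivalently, differentiate the identity $s\emf'(s)+\emf(s)=\exw(s)$. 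Note also that $\emf'(s)>0$ for every $s>0$ by \eqref{eq:aux_emf_prpos}, so that all the expressions below and their reciprocals are well defined and the factored prefactors have the asserted signs.

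For part 1), differentiating gives
\begin{equation*}
  \der{}{s}\big(\emf'(s)^{p-1}s^{N-1}\big)
  =\emf'(s)^{p-2}s^{N-2}\big[(p-1)s\emf''(s)+(N-1)\emf'(s)\big]\,,
\end{equation*}
a positive prefactor times the bracket; and since $\emf'(s)^{-1}s^{-\frac{N-1}{p-1}}=\bigl(\emf'(s)^{p-1}s^{N-1}\bigr)^{-1/(p-1)}$ is a strictly decreasing function of $\emf'(s)^{p-1}s^{N-1}$, inequality \eqref{eq:aux_emf_m} is equivalent to \eqref{eq:aux_emf_n}. Hence both reduce to $(p-1)s\emf''(s)+(N-1)\emf'(s)\ge0$. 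By the identity the left-hand side equals $(p-1)\exw'(s)+(N+1-2p)\emf'(s)$, and the decisive step is to write $N+1-2p=(N-p)-(p-1)$, turning this into $(p-1)\bigl(\exw'(s)-\emf'(s)\bigr)+(N-p)\emf'(s)$, a combination with coefficients $p-1>0$ and $N-p>0$. Bounding $\exw'(s)-\emf'(s)\ge\bigl(\alpha_{1}-\tfrac{\alpha_{2}}{\alpha_{2}+1}\bigr)\tfrac{\exw(s)}{s}$ by \eqref{eq:powerlike} and \eqref{eq:aux_emf_prpos}, and $\emf'(s)\ge\tfrac{\alpha_{1}}{\alpha_{1}+1}\tfrac{\exw(s)}{s}$ by \eqref{eq:aux_emf_prpos}, one gets a lower bound equal to $\tfrac{\exw(s)}{s}$ times exactly the left-hand side of \eqref{eq:aux_emf_nn}, which is nonnegative by assumption.

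For part 2), similarly
\begin{equation*}
  \der{}{s}\big(\emf'(s)^{-1}s^{N-1}\big)
  =\emf'(s)^{-2}s^{N-2}\big[(N-1)\emf'(s)-s\emf''(s)\big]\,,
\end{equation*}
with a positive prefactor, so \eqref{eq:aux_emf_pp} reduces to $(N-1)\emf'(s)-s\emf''(s)\ge0$, i.e., by the identity, to $(N+1)\emf'(s)\ge\exw'(s)$. This follows from the chain $(N+1)\emf'(s)\ge\tfrac{(N+1)\alpha_{1}}{\alpha_{1}+1}\tfrac{\exw(s)}{s}\ge\alpha_{2}\tfrac{\exw(s)}{s}\ge\exw'(s)$, using the lower bound in \eqref{eq:aux_emf_prpos}, hypothesis \eqref{eq:aux_emf_p}, and the upper bound in \eqref{eq:powerlike}.

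I do not expect a real obstacle here: the whole substance is the two-sided control $\tfrac{\alpha_{1}}{\alpha_{1}+1}\tfrac{\exw(s)}{s}\le\emf'(s)\le\tfrac{\alpha_{2}}{\alpha_{2}+1}\tfrac{\exw(s)}{s}$ of \eqref{eq:aux_emf_prpos} together with the formula for $s\emf''(s)$. The one point deserving care is the sign bookkeeping in part 1): the coefficient $N+1-2p$ of $\emf'(s)$ may have either sign, and the split $N+1-2p=(N-p)-(p-1)$ into pieces with known-sign coefficients is what allows a single clean estimate and pins down precisely condition \eqref{eq:aux_emf_nn}; one must likewise keep track of inequality directions when multiplying through by $p-1$ and by $\exw(s)/s>0$.
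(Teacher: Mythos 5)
Your proof is correct and follows essentially the same route as the paper's: both reduce each claim to the sign of $(p-1)s\emf''+(N-1)\emf'$ (resp.\ $(N-1)\emf'-s\emf''$), use the same decomposition with coefficients $N-p$ and $p-1$, and invoke the two-sided bounds on $\emf'$ (the paper phrases them via the integral estimate \eqref{eq:aux_emf_int}, you via \eqref{eq:aux_emf_prpos} and the identity $s\emf''=\exw'-2\emf'$, which are equivalent). No gaps.
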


It is easily seen that \eqref{eq:aux_emf_nn} and \eqref{eq:aux_emf_p} are satisfied if $\alpha_{1}=\alpha_{2}$, and also in a suitable range of $\alpha_{1}<\alpha_{2}$. For example this is the case if
\begin{equation}
  \label{eq:aux_alpha_assump}
  1\ge
  \alpha_{2}
  >
  \alpha_{1}
  \ge
  \frac{\alpha_{2}}{\alpha_{2}+1}
  \,,
  \qquad
  N\ge 2
  \,.
\end{equation}

\begin{proof}
  1) We calculate
  \begin{equation}
    \label{eq:aux_emf_i}
    \der{}{s}
    \big(
    \emf'(s)^{p-1}
    s^{N-1}
    \big)
    =
    s^{N-2}
    \emf'(s)^{p-2}
    A_{1}(s)
    \,,
  \end{equation}
  where
  \begin{equation*}
    A_{1}(s)
    :=
    (p-1)
    s
    \emf''(s)
    +
    (N-1)
    \emf'(s)
    \,.
  \end{equation*}
  According to \eqref{eq:aux_emf_prime}, \eqref{eq:aux_emf_second} and \eqref{eq:aux_emf_int} we estimate
  \begin{equation*}
    \begin{split}
      A_{1}(s)s^{2}
      &=
        (N-p)
        \Big[
        \exw(s)
        s
        -
        \int_{0}^{s}
        \exw(z)
        \di z
        \Big]
        \\
      &\quad
        +
        (p-1)
        \Big[
        \int_{0}^{s}
        \exw(z)
        \di z
        +
        s^{2}
        \exw'(s)
        -
        \exw(s)
        s
        \Big]
        \\
      &\ge
        \exw(s)s
        \Big[
        \frac{(N-p)\alpha_{1}}{\alpha_{1}+1}
        +
        (p-1)
        \Big(
        \alpha_{1}
        -
        \frac{\alpha_{2}}{\alpha_{2}+1}
        \Big)
        \Big]
        \ge
        0
        \,,
    \end{split}
  \end{equation*}
  on invoking our assumption \eqref{eq:aux_emf_nn}. The statement in \eqref{eq:aux_emf_n} is proved.
  \\
  Then in order to prove \eqref{eq:aux_emf_m} we only need observe that
  \begin{equation*}
    \emf'(s)^{-1}
    s^{-\frac{N-1}{p-1}}
    =
    \Big(
    \emf'(s)^{p-1}
    s^{N-1}
    \Big)^{-\frac{1}{p-1}}
    \,.
  \end{equation*}

2) We calculate
  \begin{equation}
    \label{eq:aux_emf_j}
    \der{}{s}
    \big(
    \emf'(s)^{-1}
    s^{N-1}
    \big)
    =
    s^{N-2}
    \emf'(s)^{-2}
    A_{2}(s)
    \,,
  \end{equation}
  where
  \begin{equation*}
    A_{2}(s)
    :=
    (N-1)
    \emf'(s)
    -
    s
    \emf''(s)
    \,.
  \end{equation*}
  According to \eqref{eq:aux_emf_prime}, \eqref{eq:aux_emf_second} and \eqref{eq:aux_emf_int} we estimate
  \begin{equation*}
    \begin{split}
      A_{2}(s)s^{2}
      &=
        (N+1)
        \Big[
        \exw(s)
        s
        -
        \int_{0}^{s}
        \exw(z)
        \di z
        \Big]
        -
        s^{2}\exw'(s)
        \\
      &\ge
        s\exw(s)
        \Big[
        \frac{(N+1)\alpha_{1}}{\alpha_{1}+1}
        -
        \alpha_{2}
        \Big]
        \ge
        0
        \,,
    \end{split}
  \end{equation*}
  on invoking our assumption \eqref{eq:aux_emf_p}. The statement in \eqref{eq:aux_emf_pp} is proved.
\end{proof}

\begin{remark}
  \label{r:aux_alpha}
  In the rest of the paper we are going to apply \eqref{eq:aux_emf_n}, \eqref{eq:aux_emf_m}, respectively \eqref{eq:aux_emf_pp}, so assumptions \eqref{eq:aux_emf_nn}, respectively \eqref{eq:aux_emf_p}, may be replaced directly by these estimates.
  \\
  Concerning the assumptions of Theorem~\ref{t:sur}, we certainly need $\alpha_{2}<1$, but the inequality $\alpha_{1}\ge \alpha_{2}/(\alpha_{2}+1)$ is only used to guarantee \eqref{eq:aux_emf_nn} as well as \eqref{eq:aux_emf_p}.
\end{remark}

In the following we use the notation $\fkf(s)=\emf'(s)$, $s>0$, $\fkf(0)=0$.

\begin{lemma}
  \label{l:aux_poinc}
  Assume $v\in W^{1,p}_{\ew}(\RN)$, $N>p>1$, and \eqref{eq:aux_emf_nn}.
  Then
  \begin{equation}
    \label{eq:aux_poinc_n}
    \int_{\RN}
    \fkf(\abs{x})^{p}
    \abs{v(x)}^{p}
    \di\ew
    \le
    C
    \int_{\RN}
    \abs{\grad v(x)}^{p}
    \di\ew
    \,.
  \end{equation}
  Here $C=C(N,p,\alpha_{1},\alpha_{2})$.
\end{lemma}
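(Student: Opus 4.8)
The natural route is a Hardy--type vector field argument: produce a radial vector field $W$ with $\Div W\gtrsim\fkf(\abs{x})^p\ew$ and $\abs{W}\le\fkf(\abs{x})^{p-1}\ew$, test it against $\abs{v}^p$, and close with H\"older. It suffices to prove \eqref{eq:aux_poinc_n} for $v\in C_0^\infty(\RN)$: for general $v\in W^{1,p}_\ew(\RN)$ one multiplies by a cut-off $\zeta_R(x)=\zeta(\abs{x}/R)$ (with $\zeta\in C_0^\infty$, $0\le\zeta\le1$, $\zeta\equiv1$ on $B_1$) and mollifies; since $\ew$ is continuous and bounded above and below on compact sets this produces $C_0^\infty$ approximations, and $\int_{\RN}\abs{\grad\zeta_R}^p\abs{v}^p\di\ew\le CR^{-p}\int_{\abs{x}\ge R}\abs{v}^p\di\ew\to0$, so letting $R\to\infty$ and using Fatou on the left recovers the general statement.

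\textbf{The vector field.} For $r=\abs{x}>0$ set
\[
  W(x)=\emf'(r)^{p-1}\,\ew(x)\,\frac{x}{r}\,,
\]
which is $C^1$ on $\RN\setminus\{0\}$ since $\emf\in C^2((0,+\infty))$ and $\emf'>0$ by \eqref{eq:aux_emf_prpos}. From $\Div\big(\phi(r)\tfrac{x}{r}\big)=r^{1-N}\der{}{r}\big(r^{N-1}\phi(r)\big)$ and $\ew=e^{\exw(r)}$ we get
\[
  \Div W=\frac{1}{r^{N-1}}\der{}{r}\big(r^{N-1}\emf'(r)^{p-1}\big)\,\ew
  +\emf'(r)^{p-1}\exw'(r)\,\ew\,.
\]
The first term is nonnegative by \eqref{eq:aux_emf_n} (Lemma~\ref{l:aux_emf}, part~1). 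For the second, \eqref{eq:aux_emf_prpos} gives $\emf'(r)\le\tfrac{\alpha_2}{\alpha_2+1}\tfrac{\exw(r)}{r}$, while \eqref{eq:powerlike} gives $\exw'(r)\ge\alpha_1\tfrac{\exw(r)}{r}\ge\tfrac{\alpha_1(\alpha_2+1)}{\alpha_2}\,\emf'(r)$, so that term is $\ge c_0\,\fkf(r)^p\,\ew$ with $c_0=\tfrac{\alpha_1(\alpha_2+1)}{\alpha_2}>0$. Hence $\Div W\ge c_0\,\fkf(\abs{x})^p\,\ew\ge0$ a.e., and by construction $\abs{W(x)}=\fkf(\abs{x})^{p-1}\ew(x)$.

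\textbf{Integration by parts and H\"older.} Fix $v\in C_0^\infty(\RN)$. Integrating $\abs{v}^p\Div W$ over $\RN\setminus B_\eps$ (where $W$ is $C^1$) and letting $\eps\to0$: by \eqref{eq:aux_non_non} one has $\emf'(r)\le Cr^{\alpha_1-1}$ near $0$, and since $\alpha_1>0$ and $N>p$ we have $N-1>(1-\alpha_1)(p-1)$, so $r^{N-1}\emf'(r)^{p-1}\to0$; thus the surface term on $\partial B_\eps$ is $O(\eps^{N-1}\emf'(\eps)^{p-1})\to0$, and the same estimates make $\abs{v}^{p-2}v\,\grad v\cdot W$ integrable (dominated convergence on the right) while $\Div W\ge0$ allows monotone convergence on the left. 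Therefore
\[
  c_0\int_{\RN}\fkf(\abs{x})^p\abs{v}^p\di\ew
  \le\int_{\RN}\abs{v}^p\,\Div W
  =-p\int_{\RN}\abs{v}^{p-2}v\,\grad v\cdot W
  \le p\int_{\RN}\fkf(\abs{x})^{p-1}\abs{v}^{p-1}\abs{\grad v}\di\ew\,,
\]
and H\"older's inequality with exponents $\tfrac{p}{p-1},p$ bounds the last integral by $p\big(\int_{\RN}\fkf^p\abs{v}^p\di\ew\big)^{\frac{p-1}{p}}\big(\int_{\RN}\abs{\grad v}^p\di\ew\big)^{\frac1p}$. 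Dividing (the left-hand integral being finite) yields \eqref{eq:aux_poinc_n} with $C=(p/c_0)^p$, depending only on $p,\alpha_1,\alpha_2$.

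\textbf{Main obstacle.} The substantive content is identifying the vector field $W$; once it is in place the inequality $\Div W\gtrsim\fkf^p\ew$ is immediate from Lemma~\ref{l:aux_emf} together with \eqref{eq:aux_emf_prpos} and \eqref{eq:powerlike}. The only delicate point is justifying the integration by parts at the origin, where $W$ is singular when $\alpha_1<1$; this is handled by the excision argument above, and it is precisely here that the hypothesis $N>p$ is used, to ensure $r^{N-1}\emf'(r)^{p-1}\to0$ as $r\to0$.
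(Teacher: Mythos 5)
Your proof is correct, and it takes a genuinely different route from the paper. The paper proves the estimate by reducing to one dimension via polar coordinates and then invoking the Opic--Kufner Hardy inequality \eqref{eq:aux_poinc_hardy} with the weight pair $w(r)=\fkf(r)^{p}r^{N-1}e^{\exw(r)}$, $\varphi(r)=r^{N-1}e^{\exw(r)}$; the work is in verifying the Muckenhoupt-type condition $\beta_{R}<+\infty$, which is done by two integrations by parts using both monotonicity statements \eqref{eq:aux_emf_n} and \eqref{eq:aux_emf_m}. You instead run a direct $N$-dimensional vector-field (divergence) argument with $W=\emf'(r)^{p-1}\ew\,x/r$: the lower bound $\Div W\ge c_{0}\fkf^{p}\ew$ follows from \eqref{eq:aux_emf_n} for the first term and from the chain \eqref{eq:powerlike}, \eqref{eq:aux_emf_prpos} giving $\exw'\ge\frac{\alpha_{1}(\alpha_{2}+1)}{\alpha_{2}}\emf'$ for the second, and the absorption via H\"older closes the estimate. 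All steps check out, including the excision at the origin (the surface term is $O(\eps^{N-1+(\alpha_{1}-1)(p-1)})\to0$ since $(1-\alpha_{1})(p-1)<p-1<N-1$) and the finiteness of the left-hand integral needed to divide through. Your argument is more self-contained (no external Hardy-inequality citation, no polar-coordinate reduction) and uses only part 1) of Lemma~\ref{l:aux_emf}; the paper's choice has the side benefit that the computation of $I_{2}$ is recycled verbatim as $J_{2}$ in the proof of Lemma~\ref{l:aux_hardy}, where the explicit form of the constant matters for the decay rates, whereas for Lemma~\ref{l:aux_poinc} itself your constant $(p/c_{0})^{p}$ is perfectly adequate. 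The density step you sketch (cut-off plus mollification) is needed in either approach and is handled correctly.
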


\begin{proof}
  We start by proving the following one dimensional version of our claim
  \begin{equation}
    \label{eq:aux_poinc_i}
    \int_{0}^{+\infty}
    \fkf(t)^{p}
    \abs{v(t)}^{p}
    t^{N-1}
    e^{\exw(t)}
    \di t
    \le
    C
    \int_{0}^{+\infty}
    \abs{v'(t)}^{p}
    t^{N-1}
    e^{\exw(t)}
    \di t
    \,,
  \end{equation}
  for all compactly supported $v\in C^{1}([0,+\infty))$. We borrow from \cite[Theorem~6.2]{Opic:Kufner:hardy} the following Hardy-type inequality: let  $1\le p\le q<\infty$, let $\varphi$, $w$ be positive and locally integrable in $(0,+\infty)$. Let also $v$ belong to the set of absolutely continuous functions over $[0,+\infty)$ vanishing at $\infty$, and assume that
  \begin{equation*}
    \beta_{R}
    :=
    \sup_{0<r<+\infty}
    \norma{w^{\frac{1}{q}}}{L^{q}((0,r))}
    \norma{\varphi^{-\frac{1}{p}}}{L^{p'}((r,+\infty))}
    <
    +
    \infty
    \,.
  \end{equation*}
  Then 
  \begin{equation}
    \label{eq:aux_poinc_hardy}
    \Big(
    \int_{0}^{+\infty}
    \abs{v(r)}^{q}
    w(r)
    \di r
    \Big)^{\frac{1}{q}}
    \le
    C_{R}
    \Big(
    \int_{0}^{+\infty}
    \abs{v'(r)}^{p}
    \varphi(r)
    \di r
    \Big)^{\frac{1}{p}}
    \,,
  \end{equation}
  where the optimal constant $C_{R}$ satisfies
  \begin{equation}
    \label{eq:aux_poinc_const}
    \beta_{R}
    \le
    C_{R}
    \le
    K(q,p)
    \beta_{R}
    \,,
    \quad
    K(q,p)
    :=
    \Big(
    1
    +
    \frac{q}{p'}
    \Big)^{\frac{1}{q}}
    \Big(
    1
    +
    \frac{p'}{q}
    \Big)^{\frac{1}{p'}}
    \,.
  \end{equation}
  We choose here $q=p$,
  \begin{equation*}
    w(r)
    =
    \fkf(r)^{p}
    r^{N-1}
    e^{\exw(r)}
    \,,
    \qquad
    \varphi(r)
    =
    r^{N-1}
    e^{\exw(r)}
    \,.
  \end{equation*}
  With this choice, \eqref{eq:aux_poinc_i} coincides with \eqref{eq:aux_poinc_hardy}.
  Thus we only need to check that
  \begin{equation}
    \label{eq:aux_poinc_ii}
    \begin{split}
      \beta_{R}
      &=
        \sup_{0<r<+\infty}
        \Big(
        \int_{0}^{r}
        \fkf(z)^{p}
        z^{N-1}
        e^{\exw(z)}
        \di z
        \Big)^{\frac{1}{p}}
      \\
      &\quad\times
        \Big(
        \int_{r}^{+\infty}
        z^{-\frac{N-1}{p-1}}
        e^{-\frac{\exw(z)}{p-1}}
        \di z
        \Big)^{\frac{p-1}{p}}
        =:
        I_{1}^{\frac{1}{p}}
        I_{2}^{\frac{p-1}{p}}
        <
        +\infty
        \,.
    \end{split}
  \end{equation}
  On applying in sequence \eqref{eq:aux_emf_prime}, \eqref{eq:powerlike}, integration by parts and then \eqref{eq:aux_emf_n}, we get
  \begin{equation*}
    \begin{split}
      I_{1}
      &=
        \int_{0}^{r}
        \fkf(z)^{p-1}
        z^{N-1}
        \frac{\emf'(z)}{\exw'(z)}
        \exw'(z)
        e^{\exw(z)}
        \di z
        \le
        \frac{1}{\alpha_{1}}
        \int_{0}^{r}
        \fkf(z)^{p-1}
        z^{N-1}
        \exw'(z)
        e^{\exw(z)}
        \di z
        \\
      &=
        \frac{1}{\alpha_{1}}
        \fkf(r)^{p-1}
        r^{N-1}
        e^{\exw(r)}
        -
        \int_{0}^{r}
        \der{}{z}
        \big(
        \fkf(z)^{p-1}
        z^{N-1}
        \big)
        e^{\exw(z)}
        \di z
        \\
      &\le
        \frac{1}{\alpha_{1}}
        \fkf(r)^{p-1}
        r^{N-1}
        e^{\exw(r)}
        \,.
    \end{split}
  \end{equation*}
  Next, by means of a similar argument, we obtain by invoking in sequence \eqref{eq:powerlike}, \eqref{eq:aux_emf_prpos}, integration by parts and then \eqref{eq:aux_emf_m}, 
  \begin{equation*}
    \begin{split}
      I_{2}
      &=
        -
        (p-1)
        \int_{r}^{+\infty}
        z^{-\frac{N-1}{p-1}}
        \frac{1}{\exw'(z)}
        \der{e^{-\frac{\exw(z)}{p-1}}}{z}
        \di z
        \le
        -
        c_{1}
        \int_{r}^{+\infty}
        z^{-\frac{N-1}{p-1}}
        \frac{1}{\fkf(z)}
        \der{e^{-\frac{\exw(z)}{p-1}}}{z}
        \di z
        \\
      &=
        c_{1}
        r^{-\frac{N-1}{p-1}}
        \frac{1}{\fkf(r)}
        e^{-\frac{\exw(r)}{p-1}}
        +
        c_{1}
        \int_{r}^{+\infty}
        \der{}{z}
        \Big(
        z^{-\frac{N-1}{p-1}}
        \frac{1}{\fkf(z)}
        \Big)
        e^{-\frac{\exw(z)}{p-1}}
        \di z
        \\
      &\le
        c_{1}
        r^{-\frac{N-1}{p-1}}
        \frac{1}{\fkf(r)}
        e^{-\frac{\exw(r)}{p-1}}
        \,,
        \qquad
        c_{1}
        :=
        \frac{(p-1)\alpha_{1}}{\alpha_{2}(\alpha_{1}+1)}
        \,.
    \end{split}
  \end{equation*}
  Thus
  \begin{equation*}
    \begin{split}
      \beta_{R}
      &\le
        \Big(
        \frac{1}{\alpha_{1}}
        \fkf(r)^{p-1}
        r^{N-1}
        e^{\exw(r)}
        \Big)^{\frac{1}{p}}
        \Big(
        c_{1}
        r^{-\frac{N-1}{p-1}}
        \frac{1}{\fkf(r)}
        e^{-\frac{\exw(r)}{p-1}}
        \Big)^{\frac{p-1}{p}}
      \\
      &=
        \Big(
        \frac{c_{1}^{p-1}}{\alpha_{1}}
        \Big)^{\frac{1}{p}}
        \,.
    \end{split}
  \end{equation*}

  Finally we can reduce the general case to the one dimensional inequality
  \eqref{eq:aux_poinc_i}.
  Indeed, the inequality \eqref{eq:aux_poinc_n} is easily obtained by writing any $v\in C^{1}_{0}(\RN)$ in polar coordinates as $v(x)=\widetilde v(r,\theta)$, $r>0$, $\theta:=(\theta_{1},\dots,\theta_{N-1})\in \varOmega$ so that $\di x=r^{N-1}\varTheta(\theta)\di r\di \theta$, for a suitable $\varTheta(\theta)>0$, and on invoking \eqref{eq:aux_poinc_i} we find
  \begin{multline}
    \label{eq:aux_poinc_j}
    \int_{\RN}
    \fkf(\abs{x})^{p}
    e^{\exw(\abs{x})}
    \abs{v(x)}^{p}
    \di x
    =
    \int_{\varOmega}
    \Big(
    \int_{0}^{+\infty}
    \fkf(r)^{p}
    e^{\exw(r)}
    r^{N-1}
    \abs{\widetilde v(r,\theta)}^{p}
    \di r
    \Big)
    \varTheta(\theta)
    \di \theta
    \\
    \le
    C
    \int_{\varOmega}
    \Big(
    \int_{0}^{+\infty}
    e^{\exw(r)}
    r^{N-1}
    \abs{\widetilde v_{r}(r,\theta)}^{p}
    \di r
    \Big)
    \varTheta(\theta)
    \di \theta
    \le
    C
    \int_{\RN}
    \abs{\grad v(x)}^{p}
    e^{\exw(x)}
    \di x
    \,.
  \end{multline}
\end{proof}

In the following we denote $a=pq/(q-p)$; note that assumption \eqref{eq:aux_hardy_nn} is equivalent to $a>N$.

\begin{lemma}
  \label{l:aux_hardy}
  Let $v:(0,+\infty)\to\R$ be absolutely continuous on each compact interval contained in $(0,+\infty)$, and such that $v(r)\to 0$ as $r\to+\infty$ as well as
  \begin{equation}
    \label{eq:aux_hardy_n}
    \int_{0}^{+\infty}
    \abs{v'(r)}^{p}
    r^{N-1}
    e^{\exw(r)}
    \di r
    <
    +\infty
    \,.
  \end{equation}
  Assume also \eqref{eq:aux_emf_nn},  \eqref{eq:aux_emf_p}, $\alpha_{1}\le\alpha_{2}<1$ and
  \begin{equation}
    \label{eq:aux_hardy_nn}
    \frac{Np}{N-p}
    >
    q
    >
    p
    \,.
  \end{equation}
  Then
  \begin{equation}
    \label{eq:aux_hardy_m}
    \Big(
    \int_{0}^{+\infty}
    \abs{v(r)}^{q}
    r^{N-1}
    e^{\exw(r)}
    \di r
    \Big)^{\frac{1}{q}}
    \le
    \CH
    \Big(
    \int_{0}^{+\infty}
    \abs{v'(r)}^{p}
    r^{N-1}
    e^{\exw(r)}
    \di r
    \Big)^{\frac{1}{p}}
    \,.
  \end{equation}
  Here for $K=K(q,p)$ as in \eqref{eq:aux_poinc_const} we have for any given $r_{0}>0$, using the notation $a=pq/(q-p)$, 
  \begin{equation}
    \label{eq:aux_hardy_mm}
    \begin{split}
      \CH
      &=
        K(q,p)
        \Big[
        \frac{1}{N^{\frac{1}{q}}}
        \Big(
        \frac{p-1}{N-p}
        \Big)^{\frac{p-1}{p}}
        (1+r_{0})
      \\
      &+
        (p-1)^{\frac{p-1}{p}}
        \Big(
        \frac{\alpha_{2}(\alpha_{1}+1)}{\alpha_{1}(\alpha_{2}+1)}
        \Big)^{\frac{1}{q}}
        \alpha_{1}^{-\frac{1}{q}}
        \alpha_{2}^{-\frac{p-1}{p}}
      \\
      &\quad\times
        \Big(
        \sum_{i=1}^{2}
        \Big(
        \frac{1}{\alpha_{i}}
        -
        1
        \Big)^{\frac{1}{\alpha_{i}}-1}
        e^{-(\frac{1}{\alpha_{i}}-1)}
        \Big)
        \Big(
        1
        +
        \frac{\exw(r_{0})^{\frac{1}{N}}}{r_{0}}
        \Big)
        \frac{\exw^{(-1)}(a)}{a}
        \Big]
        \,.
    \end{split}
  \end{equation}
\end{lemma}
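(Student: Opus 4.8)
The plan is to split the half‑line at the free radius $r_{0}$, apply the one‑dimensional Hardy inequality \eqref{eq:aux_poinc_hardy}--\eqref{eq:aux_poinc_const} in the properly degenerate range $q>p$ to each of the two pieces, and add. In both applications I take $\varphi(r)=r^{N-1}e^{\exw(r)}$ on $(0,+\infty)$, and $w(r)=r^{N-1}e^{\exw(r)}$ restricted to $(0,r_{0})$, respectively to $(r_{0},+\infty)$; the hypotheses of that inequality hold since $v$ is absolutely continuous and vanishes at $+\infty$. By the triangle inequality in the weighted $L^{q}$ norm this gives \eqref{eq:aux_hardy_m} with $\CH=K(q,p)(\beta_{R,1}+\beta_{R,2})$, where $\beta_{R,1},\beta_{R,2}$ are the Muckenhoupt‑type constants \eqref{eq:aux_poinc_ii} for the two weights; the two bracketed terms in \eqref{eq:aux_hardy_mm} will be exactly $K\beta_{R,1}$ and $K\beta_{R,2}$.

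For the near piece I estimate $\beta_{R,1}$ directly: in $\big(\int_{0}^{\min(r,r_{0})}z^{N-1}e^{\exw(z)}\di z\big)^{1/q}$ I bound $e^{\exw(z)}\le e^{\exw(\min(r,r_{0}))}$ and integrate $z^{N-1}$, while in $\big(\int_{r}^{+\infty}z^{-(N-1)/(p-1)}e^{-\exw(z)/(p-1)}\di z\big)^{(p-1)/p}$ I bound $e^{-\exw(z)/(p-1)}\le e^{-\exw(r)/(p-1)}$ (monotonicity of $\exw$) and integrate $z^{-(N-1)/(p-1)}$, which converges exactly because $N>p$. The two exponentials recombine, in both ranges, into a factor $\le 1$, and since \eqref{eq:aux_hardy_nn} is equivalent to $a>N$ the remaining power of $r$ has exponent $1-N/a\in(0,1)$; comparing the ranges $r\le r_{0}$ and $r>r_{0}$ yields $\beta_{R,1}\le N^{-1/q}\big(\tfrac{p-1}{N-p}\big)^{(p-1)/p}r_{0}^{1-N/a}$, and $r_{0}^{1-N/a}\le 1+r_{0}$. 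This is the first bracketed term of \eqref{eq:aux_hardy_mm}.

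The real content is the far piece. The second estimate below is exactly the bound on $I_{2}$ obtained in the proof of Lemma~\ref{l:aux_poinc}, while the first follows by the analogous integration by parts, now invoking \eqref{eq:aux_emf_pp} (which needs \eqref{eq:aux_emf_p}) together with \eqref{eq:powerlike} and \eqref{eq:aux_emf_prpos}:
\[
\int_{0}^{r}z^{N-1}e^{\exw(z)}\di z\le\frac{\alpha_{2}}{\alpha_{1}(\alpha_{2}+1)}\,\fkf(r)^{-1}r^{N-1}e^{\exw(r)},
\]
\[
\int_{r}^{+\infty}z^{-\frac{N-1}{p-1}}e^{-\frac{\exw(z)}{p-1}}\di z\le\frac{(p-1)\alpha_{1}}{\alpha_{2}(\alpha_{1}+1)}\,\fkf(r)^{-1}r^{-\frac{N-1}{p-1}}e^{-\frac{\exw(r)}{p-1}}.
\]
Raising these to the powers $1/q$ and $(p-1)/p$ and multiplying, the $\fkf$‑powers combine to $\fkf(r)^{-(1-1/a)}$, the $r$‑powers to $r^{-(N-1)/a}$, and the exponentials to $e^{-\exw(r)/a}$; then $\fkf(r)\ge\tfrac{\alpha_{1}}{\alpha_{1}+1}\exw(r)/r$ and the fact that $\exw(r)r^{-\alpha_{2}}$ is non‑increasing (with $\alpha_{2}<N$) bound the product, for $r>r_{0}$, by $c\,\big(1+\exw(r_{0})^{1/N}/r_{0}\big)\,r\,\exw(r)^{-1}e^{-\exw(r)/a}$, where $c=(p-1)^{(p-1)/p}\big(\tfrac{\alpha_{2}(\alpha_{1}+1)}{\alpha_{1}(\alpha_{2}+1)}\big)^{1/q}\alpha_{1}^{-1/q}\alpha_{2}^{-(p-1)/p}$ is the prefactor in the second bracketed term of \eqref{eq:aux_hardy_mm}.

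It then remains to bound $\sup_{r>r_{0}}r\,\exw(r)^{-1}e^{-\exw(r)/a}$. With the substitution $\tau=\exw(r)$, $r=\exw^{(-1)}(\tau)$, and \eqref{eq:aux_powerlike_inv}--\eqref{eq:aux_powerlike_inv2} relative to the base point $z=a$, i.e.\ $\exw^{(-1)}(\tau)\le\exw^{(-1)}(a)(\tau/a)^{1/\alpha_{1}}$ for $\tau\ge a$ and $\exw^{(-1)}(\tau)\le\exw^{(-1)}(a)(\tau/a)^{1/\alpha_{2}}$ for $\tau<a$, the supremum is bounded by $\tfrac{\exw^{(-1)}(a)}{a}\max_{\tau>0}\tau^{1/\alpha_{i}-1}e^{-\tau/a}$ for $i=1,2$; here the assumption $\alpha_{i}\le\alpha_{2}<1$ is exactly what makes $1/\alpha_{i}-1>0$, so $\max_{x>0}x^{1/\alpha_{i}-1}e^{-x}=(1/\alpha_{i}-1)^{1/\alpha_{i}-1}e^{-(1/\alpha_{i}-1)}$, and bounding the maximum over $i=1,2$ by the sum gives the second bracketed term of \eqref{eq:aux_hardy_mm}. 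Adding the two contributions proves \eqref{eq:aux_hardy_m} with the stated $\CH$. The one delicate point is to carry every constant unchanged through the recombination in the far‑piece estimate; everything else is the Hardy inequality plus the integrations by parts already performed in Lemma~\ref{l:aux_poinc}.
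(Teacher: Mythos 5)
Your proposal is correct and follows essentially the same route as the paper: a crude bound of the Muckenhoupt constant for $r\le r_{0}$, the integration-by-parts estimates on $J_{1}$ (via \eqref{eq:aux_emf_pp}) and $J_{2}$ (the $I_{2}$ bound from Lemma~\ref{l:aux_poinc}) for $r>r_{0}$, the monotonicity of $\exw(r)r^{-\alpha_{2}}$ to control $(\exw(r)/r^{N})^{1/a}$, and the substitution $\tau=\exw(r)$ with \eqref{eq:aux_powerlike_inv}--\eqref{eq:aux_powerlike_inv2} to bound $F$. The only (harmless) cosmetic difference is that you apply the Hardy inequality twice to the two restricted weights and add by the triangle inequality, whereas the paper applies it once and splits the supremum defining $\beta_{0}$ over $r\le r_{0}$ and $r>r_{0}$; both yield the constant \eqref{eq:aux_hardy_mm}.
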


\begin{proof}
  We invoke again, from \cite{Opic:Kufner:hardy}, the inequality \eqref{eq:aux_poinc_hardy}, where we take $\varphi(r)=w(r)=r^{N-1}e^{\exw(r)}$, immediately getting \eqref{eq:aux_hardy_m} with $\CH$ formally replaced by the constant
  \begin{equation*}
    C
    =
    \beta_{0}
    K(q,p)
    \,,
  \end{equation*}
  provided $\beta_{0}:=\sup\{A(r):r>0\}<+\infty$, where
  \begin{equation*}
    A(r)
    =
    \Big(
    \int_{0}^{r}
    z^{N-1}
    e^{\exw(z)}
    \di z
    \Big)^{\frac{1}{q}}
    \Big(
    \int_{r}^{+\infty}
    z^{-\frac{N-1}{p-1}}
    e^{-\frac{\exw(z)}{p-1}}
    \di z
    \Big)^{\frac{p-1}{p}}
    =:
    J_{1}^{\frac{1}{q}}
    J_{2}^{\frac{p-1}{p}}
    \,.
  \end{equation*}
  We then only have to find a bound for $\beta_{0}$.
  \\
  On using the monotonicity of $\exw$, and of course of the exponential, and the fact that $(N-1)/(p-1)>1$ we find by means of routine majorizations for all $r>0$
  \begin{equation}
    \label{eq:aux_hardy_i}
    \begin{split}
    A(r)
    &\le
    \Big(
    \frac{r^{N}}{N}
    e^{\exw(r)}
    \Big)^{\frac{1}{q}}
    \Big(
    \frac{p-1}{N-p}
    r^{-\frac{N-p}{p-1}}
    e^{-\frac{\exw(r)}{p-1}}
    \Big)^{\frac{p-1}{p}}
    \\
      &=
        \frac{1}{N^{\frac{1}{q}}}
        \Big(
        \frac{p-1}{N-p}
        \Big)^{\frac{p-1}{p}}
        r^{\frac{Np-q(N-p)}{qp}}
        e^{-\exw(r)\frac{q-p}{pq}}
        \,.
    \end{split}
  \end{equation}
Note that \eqref{eq:aux_hardy_i} yields an uniform bound for $A$ in all $(0,+\infty)$, since $Np-q(N-p)>0$ according to our assumptions. However, this bound is not optimal for large $r$ and therefore we are going to use it only for $0<r\le r_{0}$, for any fixed $r_{0}>0$, obtaining for $r\le r_{0}$
\begin{equation}
  \label{eq:aux_hardy_ik}
  A(r)
  \le
  \frac{1}{N^{\frac{1}{q}}}
  \Big(
  \frac{p-1}{N-p}
  \Big)^{\frac{p-1}{p}}
  r_{0}^{1-\frac{N}{a}}
  \le
  \frac{1}{N^{\frac{1}{q}}}
  \Big(
  \frac{p-1}{N-p}
  \Big)^{\frac{p-1}{p}}
  (1+r_{0})
  \,,
\end{equation}
for all values of $r_{0}>0$, since $a>N$.

Instead for large $r$ we reason as follows.   On applying in sequence \eqref{eq:powerlike}, \eqref{eq:aux_emf_prpos}, integration by parts and then \eqref{eq:aux_emf_pp}, we get
\begin{equation}
  \label{eq:aux_hardy_j}
  \begin{split}
    J_{1}
    &=
      \int_{0}^{r}
      \frac{z^{N-1}}{\exw'(z)}
      \der{e^{\exw(z)}}{z}
      \di z
      \le
      c_{2}
      \int_{0}^{r}
      \frac{z^{N-1}}{\fkf(z)}
      \der{e^{\exw(z)}}{z}
      \di z
    \\
    &=
      c_{2}
      \frac{r^{N-1}}{\fkf(r)}
      e^{\exw(r)}
      -
      c_{2}
      \int_{0}^{r}
      \der{}{z}
      \Big(
      \frac{z^{N-1}}{\fkf(z)}
      \Big)
      e^{\exw(z)}
      \di z
    \\
    &\le
      c_{2}
      \frac{r^{N-1}}{\fkf(r)}
      e^{\exw(r)}
      \,,
      \qquad
      c_{2}
      :=
      \frac{\alpha_{2}}{\alpha_{1}(\alpha_{2}+1)}
      \,.
  \end{split}
\end{equation}
Next we remark that, according to the notation in the proof of Lemma~\ref{l:aux_poinc}, we have $J_{2}=I_{2}$ and that we already obtained there the estimate (under assumption \eqref{eq:aux_emf_nn})
\begin{equation}
  \label{eq:aux_hardy_jj}
  J_{2}
  =
  I_{2}
  \le
  c_{1}
  r^{-\frac{N-1}{p-1}}
  \frac{1}{\fkf(r)}
  e^{-\frac{\exw(r)}{p-1}}
  \,,
  \qquad
  c_{1}
  :=
  \frac{(p-1)\alpha_{1}}{\alpha_{2}(\alpha_{1}+1)}
  \,.
\end{equation}
Then we collect \eqref{eq:aux_hardy_j}, \eqref{eq:aux_hardy_jj} and obtain, on invoking \eqref{eq:aux_emf_prpos} again,
\begin{equation}
  \label{eq:aux_hardy_jjj}
  \begin{split}
    A(r)
    &=
      J_{1}^{\frac{1}{q}}
      J_{2}^{\frac{p-1}{p}}
      \le
      c_{1}^{\frac{p-1}{p}}
      c_{2}^{\frac{1}{q}}
      \Big(
      \frac{r^{N-1}}{\fkf(r)}
      e^{\exw(r)}
      \Big)^{\frac{1}{q}}
      \Big(
      r^{-\frac{N-1}{p-1}}
      \frac{1}{\fkf(r)}
      e^{-\frac{\exw(r)}{p-1}}
      \Big)^{\frac{p-1}{p}}
      \\
    &=
      c_{1}^{\frac{p-1}{p}}
      c_{2}^{\frac{1}{q}}
      \Big(
      r^{(N-1)(p-q)}
      \fkf(r)^{-p-q(p-1)}
      e^{(p-q)\exw(r)}
      \Big)^{\frac{1}{pq}}
      \\
    &\le
      c_{3}
      \Big(
      r^{N(p-q)+pq}
      \exw(r)^{-p-q(p-1)}
      e^{(p-q)\exw(r)}
      \Big)^{\frac{1}{pq}}
      =:
      c_{3}
      A_{3}(r)
      \,,
  \end{split}
\end{equation}
where
\begin{equation*}
  \begin{split}
    c_{3}
    &=
      c_{1}^{\frac{p-1}{p}}
      c_{2}^{\frac{1}{q}}
      \Big(
      \frac{\alpha_{1}+1}{\alpha_{1}}
      \Big)^{\frac{p+q(p-1)}{pq}}
    \\
    &=
      (p-1)^{\frac{p-1}{p}}
      \Big(
      \frac{\alpha_{2}(\alpha_{1}+1)}{\alpha_{1}(\alpha_{2}+1)}
      \Big)^{\frac{1}{q}}
      \alpha_{1}^{-\frac{1}{q}}
      \alpha_{2}^{-\frac{p-1}{p}}
      \,,
  \end{split}
\end{equation*}
and, by elementary calculations,
\begin{equation}
  \label{eq:aux_hardy_k}
  A_{3}(r)
  =
  F(r)
  \Big(
  \frac{\exw(r)}{r^{N}}
  \Big)^{\frac{1}{a}}
  \qquad
  F(r)
  =:
  \frac{r}{\exw(r)}
  e^{-\frac{ \exw(r)}{a}}
  \,.
\end{equation}
Note that \eqref{eq:powerlike} immediately implies the first inequality in
\begin{equation}
  \label{eq:aux_hardy_jk}
  \Big(
  \frac{\exw(r)}{r^{N}}
  \Big)^{\frac{1}{a}}
  \le
  \Big(
  \frac{\exw(r_{0})}{r_{0}^{N}}
  \Big)^{\frac{1}{a}}
  \le
  1
  +
  \frac{\exw(r_{0})^{\frac{1}{N}}}{r_{0}}
  \,,
  \qquad
  r
  \ge
  r_{0}
  \,,
\end{equation}
while the second inequality here follows from $a>N$, for all possible values of $\exw(r_{0})/r_{0}^{N}$.
\\
Then we are left with the task of estimating $F(r)$.
To this end we perform the change of variable
\begin{equation}
  \label{eq:aux_hardy_trsf}
  r
  =
  \exw^{(-1)}(
  a\lambda
  )
  \,,
  \qquad
  \lambda>0
  \,,
\end{equation}
mapping monotonically and surjectively $r\in(0,+\infty)$ to
$\lambda\in(0,+\infty)$.
We obtain that
\begin{equation}
  \label{eq:aux_hardy_kk}
  F(r(\lambda))
  =
  \frac{\exw^{(-1)}(a\lambda)}{a\lambda}
  e^{-\lambda}
  \,.
\end{equation}
We are going first to estimate $F(r(\lambda))$ for $\lambda>1$; thus we may appeal to \eqref{eq:aux_powerlike_inv} to get
\begin{equation}
  \label{eq:aux_hardy_kkk}
    F(r(\lambda))
    \le
      \frac{\exw^{(-1)}(a)}{a}
      \lambda^{\frac{1}{\alpha_{1}}-1}
      e^{-\lambda}
    \le
      c_{4}(\alpha_{1})
      \frac{\exw^{(-1)}(a)}{a}
      \,,
\end{equation}
where for all $0<\alpha<1$ we write
\begin{equation*}
  c_{4}(\alpha)
  =
  \max_{\lambda>0}
  \lambda^{\frac{1}{\alpha}-1}
  e^{-\lambda}
  =
  \Big(
  \frac{1}{\alpha}
  -
  1
  \Big)^{\frac{1}{\alpha}-1}
  e^{-(\frac{1}{\alpha}-1)}
  \,;
\end{equation*}
note that the bound in \eqref{eq:aux_hardy_kkk} is correct even if $1/\alpha_{1}-1\le1$. 
Next, by the same token, we bound for $0<\lambda\le 1$
\begin{equation}
  \label{eq:aux_hardy_kkkj}
    F(r(\lambda))
    \le
      \frac{\exw^{(-1)}(a)}{a}
      \lambda^{\frac{1}{\alpha_{2}}-1}
      e^{-\lambda}
    \le
      c_{4}(\alpha_{2})
      \frac{\exw^{(-1)}(a)}{a}
      \,.
\end{equation}
Finally, on collecting \eqref{eq:aux_hardy_ik}, as well as \eqref{eq:aux_hardy_jjj}, \eqref{eq:aux_hardy_jk}, \eqref{eq:aux_hardy_kkk}, \eqref{eq:aux_hardy_kkkj}, we infer \eqref{eq:aux_hardy_m} with $\CH\ge C$ as in \eqref{eq:aux_hardy_m}.
\end{proof}

\begin{lemma}
  \label{l:aux_bdd}
  Assume that $v\in W_{0}^{1,p}(B_{R})$, $R<+\infty$, that $\alpha_{2}\le 1$ and \eqref{eq:aux_emf_nn}.
  \\
  Then for $q$ as in \eqref{eq:aux_hardy_nn} and $a=pq/(q-p)$,
  \begin{equation}
    \label{eq:aux_bdd_n}
    \Big(
    \int_{B_{R}}
    \abs{v}^{q}
    \di\ew
    \Big)^{\frac{1}{q}}
    \le
    C
    \Big(
    \int_{B_{R}}
    \abs{\grad v}^{p}
    \di\ew
    \Big)^{\frac{1}{p}}
    \fkf(R)^{\frac{N}{a}-1}
    \,.
  \end{equation}
  Here $C=C(N,p,\alpha_{1},\alpha_{2})$.
\end{lemma}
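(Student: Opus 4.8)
The plan is to obtain \eqref{eq:aux_bdd_n} by interpolating between a weighted Poincar\'e inequality on $B_{R}$, with constant comparable to $\fkf(R)^{-1}$, and a weighted Sobolev inequality on $B_{R}$ with the critical exponent $p^{*}:=Np/(N-p)$ and a constant that does \emph{not} depend on $R$. The assumption $\alpha_{2}\le 1$ will enter only to guarantee that $\fkf$ decays slowly inside $B_{R}$, and the one delicate point will be the $R$-independence of the Sobolev constant.

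\emph{Step 1 (slow decay of $\fkf$ and a weighted Poincar\'e inequality).} For $0<s\le R$, \eqref{eq:aux_emf_prpos} combined with the fact that $\exw(s)s^{-\alpha_{2}}$ is non-increasing (see \eqref{eq:aux_non_non}) and with $\alpha_{2}\le 1$ gives
\begin{equation*}
  \frac{\fkf(s)}{\fkf(R)}
  \ge
  \frac{\alpha_{1}(\alpha_{2}+1)}{\alpha_{2}(\alpha_{1}+1)}\,
  \frac{\exw(s)/s}{\exw(R)/R}
  \ge
  \frac{\alpha_{1}(\alpha_{2}+1)}{\alpha_{2}(\alpha_{1}+1)}\Big(\frac{R}{s}\Big)^{1-\alpha_{2}}
  \ge
  c_{0}:=\frac{\alpha_{1}(\alpha_{2}+1)}{\alpha_{2}(\alpha_{1}+1)}
  \,.
\end{equation*}
Extending $v$ by zero so that $v\in W^{1,p}_{\ew}(\RN)$ and applying Lemma~\ref{l:aux_poinc} (legitimate by \eqref{eq:aux_emf_nn} and the standing assumption $N>p$) gives $\int_{B_{R}}\fkf(\abs{x})^{p}\abs{v}^{p}\di\ew\le C\int_{B_{R}}\abs{\grad v}^{p}\di\ew$; using $\fkf(\abs{x})\ge c_{0}\fkf(R)$ on $B_{R}$ this becomes
\begin{equation*}
  \int_{B_{R}}\abs{v}^{p}\di\ew
  \le
  \frac{C}{c_{0}^{p}\,\fkf(R)^{p}}\int_{B_{R}}\abs{\grad v}^{p}\di\ew
  \,,
\end{equation*}
$C$ being the constant of Lemma~\ref{l:aux_poinc}.

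\emph{Step 2 (weighted Sobolev inequality with $R$-independent constant).} Set $w:=v\,e^{\exw(\abs{x})/p^{*}}$. Since on $B_{R}$ the weight $\ew$ is bounded between two positive constants, $\abs{\grad w}\le e^{\exw(\abs{x})/p^{*}}(\abs{\grad v}+(1/p^{*})\exw'(\abs{x})\abs{v})$ and $\exw'(\abs{x})\le\alpha_{2}\exw(\abs{x})/\abs{x}\le(\alpha_{2}(\alpha_{1}+1)/\alpha_{1})\fkf(\abs{x})$ by \eqref{eq:powerlike} and \eqref{eq:aux_emf_prpos}, Lemma~\ref{l:aux_poinc} shows $\grad w\in L^{p}(B_{R})$, hence $w\in W^{1,p}_{0}(B_{R})$. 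The classical Sobolev inequality $\Norma{w}{L^{p^{*}}(B_{R})}\le c(N,p)\Norma{\grad w}{L^{p}(B_{R})}$, together with $\abs{w}^{p^{*}}=\abs{v}^{p^{*}}\ew$ and $e^{p\exw/p^{*}}\le e^{\exw}$ (because $p/p^{*}=(N-p)/N<1$), then yields
\begin{equation*}
  \Big(\int_{B_{R}}\abs{v}^{p^{*}}\di\ew\Big)^{1/p^{*}}
  \le
  c(N,p,\alpha_{1},\alpha_{2})\Big(\int_{B_{R}}\abs{\grad v}^{p}\di\ew
  +\int_{B_{R}}\fkf(\abs{x})^{p}\abs{v}^{p}\di\ew\Big)^{1/p}
  \,,
\end{equation*}
and Lemma~\ref{l:aux_poinc} absorbs the last integral into the first, so that $\Norma{v}{L^{p^{*}}_{\ew}(B_{R})}\le C_{2}\Norma{\grad v}{L^{p}_{\ew}(B_{R})}$ with $C_{2}=C_{2}(N,p,\alpha_{1},\alpha_{2})$, uniformly in $R$.

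\emph{Step 3 (interpolation).} By \eqref{eq:aux_hardy_nn} we have $p<q<p^{*}$, so $1/q=(1-\theta)/p+\theta/p^{*}$ with $\theta=N/a\in(0,1)$ (recall \eqref{eq:aux_hardy_nn} means $a>N$). The elementary interpolation of $L^{r}$-norms with respect to $\di\ew$ on $B_{R}$ gives $\Norma{v}{L^{q}_{\ew}(B_{R})}\le\Norma{v}{L^{p}_{\ew}(B_{R})}^{1-\theta}\Norma{v}{L^{p^{*}}_{\ew}(B_{R})}^{\theta}$; plugging in Steps 1 and 2,
\begin{equation*}
  \Norma{v}{L^{q}_{\ew}(B_{R})}
  \le
  \Big(\frac{C^{1/p}}{c_{0}}\Big)^{1-N/a}C_{2}^{N/a}\,
  \fkf(R)^{-(1-N/a)}\,\Norma{\grad v}{L^{p}_{\ew}(B_{R})}
  \,,
\end{equation*}
which is \eqref{eq:aux_bdd_n}, since $N/a-1=-(1-N/a)$; as $1-N/a$ and $N/a$ are positive and sum to $1$, the resulting constant is at most $\max(C^{1/p}/c_{0},C_{2},1)$ and so depends only on $N,p,\alpha_{1},\alpha_{2}$. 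The step that needs care is the uniformity in $R$ of $C_{2}$ in Step 2: it is crucial to conjugate by $e^{\exw/p^{*}}$ (so that no power of $e^{\exw(R)}$ is left over) and to dispose of the zeroth-order term via Lemma~\ref{l:aux_poinc}; everything else is routine.
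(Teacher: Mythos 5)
Your proof is correct and follows essentially the same route as the paper: the classical Sobolev inequality applied to $v$ conjugated by an exponential factor, Lemma~\ref{l:aux_poinc} to absorb the zeroth-order term and to produce the Poincar\'e bound with constant $\fkf(R)^{-p}$ (via the lower bound $\fkf(s)\ge c\,\fkf(R)$ on $B_R$, which is exactly \eqref{eq:aux_bdd_iv}), and interpolation between the exponents $p$ and $p^{*}$. The only cosmetic difference is that you conjugate by $e^{\exw/p^{*}}$ so that standard $L^{r}_{\ew}$-interpolation applies directly, whereas the paper uses $w=ve^{\exw/p}$ and compensates with the elementary weight comparisons $e^{\exw p^{*}/q}\le e^{\exw p^{*}/p}$, $e^{\exw p/q}\le e^{\exw}$ in the H\"older step.
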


\begin{proof}
  We start from the standard Sobolev inequality for $w\in W_{0}^{1,p}(B_{R})$
  \begin{equation}
    \label{eq:aux_bdd_i}
    \Big(
    \int_{B_{R}}
    \abs{w}^{p^{*}}
    \di x
    \Big)^{\frac{1}{p^{*}}}
    \le
    C(N,p)
    \Big(
    \int_{B_{R}}
    \abs{\grad w}^{p}
    \di x
    \Big)^{\frac{1}{p}}
    \,,
    \quad
    p^{*}
    =
    \frac{Np}{N-p}
    \,.
  \end{equation}
  We take in \eqref{eq:aux_bdd_i} $w=ve^{\exw/p}$. Note that owing to \eqref{eq:aux_non_non} and \eqref{eq:powerlike} we have $\exw'(s)\le \alpha_{2}\exw(1)s^{\alpha_{1}-1}$ for $0<s<1$, so that $\abs{\grad \exw}\in L^{p}_{\textup{loc}}(\RN)$, since $p<N$. Then by means of a direct calculation, and by using \eqref{eq:aux_emf_prpos} and Lemma~\ref{l:aux_poinc} we get for $C=C(N,p,\alpha_{1},\alpha_{2})$
  \begin{equation}
    \label{eq:aux_bdd_ii}
    \begin{split}
      \int_{B_{R}}
      \abs{v}^{p^{*}}
      e^{\frac{p^{*}}{p}\exw}
      \di x
      &\le
      C
        \Big(
        \int_{B_{R}}
        \big(
        \abs{\grad v}^{p}
        +
        \abs{\grad\exw}^{p}
        \abs{v}^{p}
        \big)
        e^{\exw}
        \di x
        \Big)^{\frac{p^{*}}{p}}
        \\
      &\le
        C
        \Big(
        \int_{B_{R}}
        \big(
        \abs{\grad v}^{p}
        +
        \fkf(\abs{x})^{p}
        \abs{v}^{p}
        \big)
        e^{\exw}
        \di x
        \Big)^{\frac{p^{*}}{p}}
        \\
      &\le
        C
        \Big(
        \int_{B_{R}}
        \abs{\grad v}^{p}
        e^{\exw}
        \di x
        \Big)^{\frac{p^{*}}{p}}
        \,.        
    \end{split}
  \end{equation}
  Let $q\in(p,p^{*})$. On applying the H\"older inequality, and recalling $e^{\exw p^{*}/q}\le e^{\exw p^{*}/p}$, $e^{\exw p/q}\le e^{\exw}$ we have
  \begin{equation}
    \label{eq:aux_bdd_iii}
    \begin{split}
      \int_{B_{R}}
      \abs{v}^{q}
      e^{\exw}
      \di x
      &\le
        \Big(
        \int_{B_{R}}
        \abs{v}^{p^{*}}
        e^{\frac{p^{*}}{p}\exw}
        \di x
        \Big)^{\frac{q-p}{p^{*}-p}}
        \Big(
        \int_{B_{R}}
        \abs{v}^{p}
        e^{\exw}
        \di x
        \Big)^{\frac{p^{*}-q}{p^{*}-p}}
        \\
      &\le
        C
        \Big(
        \int_{B_{R}}
        \abs{\grad v}^{p}
        e^{\exw}
        \di x
        \Big)^{\frac{p^{*}}{p}\,\frac{q-p}{p^{*}-p}}
        \Big(
        \int_{B_{R}}
        \abs{v}^{p}
        e^{\exw}
        \di x
        \Big)^{\frac{p^{*}-q}{p^{*}-p}}
        \,,
    \end{split}
  \end{equation}
  where in the last inequality we applied \eqref{eq:aux_bdd_ii}. Note that, as $q<p^{*}$, the $C$ in \eqref{eq:aux_bdd_iii} may be taken independent of $q$.
  \\
  Next we take into account that for $\alpha_{2}\le 1$ the function $s\mapsto \exw(s)/s$ is non-increasing by \eqref{eq:aux_non_non}, so that for $s<R$ we have by \eqref{eq:aux_emf_prpos}
  \begin{equation}
    \label{eq:aux_bdd_iv}
    \fkf(s)
    \ge
    \frac{\alpha_{1}}{\alpha_{1}+1}
    \frac{\exw(s)}{s}
    \ge
    \frac{\alpha_{1}}{\alpha_{1}+1}
    \frac{\exw(R)}{R}
    \ge
    \frac{\alpha_{1}}{\alpha_{1}+1}
    \frac{\alpha_{2}+1}{\alpha_{2}}
    \fkf(R)
    \,.
  \end{equation}
  Then we may use again Lemma~\ref{l:aux_poinc}  to bound the last integral in \eqref{eq:aux_bdd_iii} by
  \begin{equation}
    \label{eq:aux_bdd_v}
    \begin{split}
      \int_{B_{R}}
      \abs{v}^{p}
      e^{\exw}
      \di x
      &\le
        C(p,\alpha_{1},\alpha_{2})
        \fkf(R)^{-p}
        \int_{B_{R}}
        \abs{v}^{p}
        \fkf(\abs{x})^{p}
        e^{\exw}
        \di x
        \\
      &\le
        C
        \fkf(R)^{-p}
        \int_{B_{R}}
        \abs{\grad v}^{p}
        e^{\exw}
        \di x
        \,.
    \end{split}
  \end{equation}
  Finally we collect \eqref{eq:aux_bdd_iii} and \eqref{eq:aux_bdd_v} to infer
  \begin{equation}
    \label{eq:aux_bdd_vi}
    \begin{split}
      \int_{B_{R}}
      \abs{v}^{q}
      e^{\exw}
      \di x
      &\le
        C
        \fkf(R)^{-p\,\frac{p^{*}-q}{p^{*}-p}}
        \Big(
        \int_{B_{R}}
        \abs{\grad v}^{p}
        e^{\exw}
        \di x
        \Big)^{\frac{p^{*}}{p}\,\frac{q-p}{p^{*}-p}+ \frac{p^{*}-q}{p^{*}-p}}
      \\
      &=
      C
      \fkf(R)^{-q+\frac{Nq}{a}}
      \Big(
      \int_{B_{R}}
      \abs{\grad v}^{p}
      e^{\exw}
      \di x
      \Big)^{\frac{q}{p}}
      \,,
    \end{split}
  \end{equation}
  that is \eqref{eq:aux_bdd_n}.
\end{proof}

\section{Proof of Theorem~\ref{t:sur}}
\label{s:sur}

The proof is complex and we divide it in several steps. Preliminarily, we remark that the assumptions \eqref{eq:aux_emf_nn} and \eqref{eq:aux_emf_p} are in force, owing to the requirement $\alpha_{1}\ge \alpha_{2}/(\alpha_{2}+1)$ in \eqref{eq:sur_n}, and to the fact that $N\ge 2>2\alpha_{2}$.
\\
We also remark that by conservation of mass
\begin{equation}
  \label{eq:conservation}
  \int_{\RN}
  u(x,t)
  f(x)
  \di x
  =
  \int_{\RN}
  u_{0}(x)
  f(x)
  \di x
  =
  \norma{u_{0}\ew}{1}
  \,,
  \qquad
  t>0
  \,.
\end{equation}
We assume in this section  that $\norma{u_{0}\ew}{1}=1$, which is always possible by the remark that
\begin{equation*}
  U(x,t)
  =
  \lambda
  u(x,\lambda^{p+m-3}t)
\end{equation*}
is a solution to \eqref{eq:pde}, for any fixed $\lambda>0$.

\textbf{First step.}
As a first step, we obtain the $\sup$ estimate in \eqref{eq:sur_kk} below.
\\
We begin by stating the following inequality of Caccioppoli type: for any $a_{1}>a_{2}>0$, $\tau_{1}>\tau_{2}>0$, $s>1$ (and $s>3-m$ if $m<1$),
\begin{equation}
  \label{eq:sur_cacc}
  \begin{split}
    &\sup_{\tau_{1}<\tau<t}
      \int_{\RN}
      \ppos{u(\tau)-a_{1}}^{s}
      \di\ew
      +
      \int_{\tau_{1}}^{t}
      \int_{\RN}
      \abs{\grad\ppos{u-a_{1}}^{\theta}}^{p}
      \di\ew\di \tau
    \\
    &\quad
      \le
      c
      \Big(
      \frac{a_{1}}{a_{1}-a_{2}}
      \Big)^{\abs{m-1}}
      (\tau_{1}-\tau_{2})^{-1}
      \int_{\tau_{2}}^{t}
      \int_{\RN}
      \ppos{u-a_{2}}^{s}
      \di\ew
      \di \tau
      \,,
  \end{split}
\end{equation}
where $\theta=(p+m+s-3)/p$ and $c$ here and in the rest of this section denotes a constant depending on $m$, $p$, $N$, $s$, $\exw(1)$. Note that $s$ is fixed here for the rest of the proof. The proof of \eqref{eq:sur_cacc} is standard and we omit it; see also \cite[Lemma~2.17]{Andreucci:Tedeev:2021b}. Define $h_{0}>h_{\infty}>0$, $\tau_{0}>\tau_{\infty}>0$ and for $i\ge0$
\begin{equation*}
  k_{i}
  =
  h_{\infty}
  +
  (h_{0}-h_{\infty})
  2^{-i}
  \,,
  \quad
  t_{i}
  =
  \tau_{\infty}
  +
  (\tau_{0}-\tau_{\infty})
  2^{-i}
  \,,
  \quad
  v_{i}
  =
  \ppos{u-k_{i}}^{\theta}
  \,.
\end{equation*}
We take in \eqref{eq:sur_cacc} $a_{1}=k_{i}$, $a_{2}=k_{i+1}$, $\tau_{1}=t_{i}$, $\tau_{2}=t_{i+1}$ to get
\begin{equation}
  \label{eq:sur_i}
  \sup_{t_{i}<\tau<t}
  \int_{\RN}
  v_{i}^{d}
  \di\ew
  +
  \int_{t_{i}}^{t}
  \int_{\RN}
  \abs{\grad v_{i}}^{p}
  \di\ew\di \tau
  \le
  b^{i}\Psi
  \int_{t_{i+1}}^{t}
  \int_{\RN}
  v_{i+1}^{d}
  \di\ew
  \di \tau
  \,,
\end{equation}
where $b=2^{\abs{m-1}+1}$, $d=s/\theta<p$ and
\begin{equation*}
  \Psi
  :=
  c
  \Big(
  \frac{h_{0}}{h_{0}-h_{\infty}}
  \Big)^{\abs{m-1}}
  (\tau_{0}-\tau_{\infty})^{-1}
  \,.
\end{equation*}
We also set
\begin{equation*}
  \msw(\tau,k)
  =
  \int_{\{u(\tau)>k\}}
  \di\ew
  \,,
  \qquad
  k>0
  \,.
\end{equation*}
\\
On the other hand, by means of Lemma~\ref{l:aux_hardy} we have for $q\in(p,p^{*})$
\begin{equation}
  \label{eq:sur_hardy_i}
  \Big(
  \int_{\RN}
  v_{i+1}^{q}
  \di\ew
  \Big)^{\frac{1}{q}}
  \le
  \CH
  \Big(
  \int_{\RN}
  \abs{\grad v_{i+1}}^{p}
  \di\ew
  \Big)^{\frac{1}{p}}
  \,,
\end{equation}
where $\CH$ is the constant in \eqref{eq:aux_hardy_mm}; the number $q$ will be chosen below. Hence, on applying H\"older and Young inequalities we obtain from \eqref{eq:sur_hardy_i}
\begin{equation}
  \label{eq:sur_ii}
  \begin{split}
    &\int_{\RN}
    v_{i}(\tau)^{d}
    \di\ew
    \le
      \CH^{d}
      \Big(
      \int_{\RN}
      \abs{\grad v_{i+1}(\tau)}^{p}
      \di\ew
      \Big)^{\frac{d}{p}}
      \msw(\tau,k_{i+1})^{1-\frac{d}{q}}
      \\
    &\quad\le
      \frac{d}{p}
      \eps^{\frac{p}{d}}
      \int_{\RN}
      \abs{\grad v_{i+1}(\tau)}^{p}
      \di\ew
      +
      \frac{p-d}{p}
      \eps^{-\frac{p}{p-d}}
      \CH^{\frac{pd}{p-d}}
      \msw(\tau,k_{i+1})^{1+\frac{d(q-p)}{q(p-d)}}
      \,.
  \end{split}
\end{equation}
On integrating in time over $(t_{i+1},t)$ the inequality \eqref{eq:sur_ii} we get
\begin{equation}
  \label{eq:sur_iii}
  \begin{split}
    &b^{i}
      \Psi
      \int_{t_{i+1}}^{t}
      \int_{\RN}
      v_{i+1}^{d}
      \di\ew
      \di \tau
      \le
      \frac{d}{p}b^{i}
      \Psi
      \eps^{\frac{p}{d}}
      \int_{t_{i+1}}^{t}
      \int_{\RN}
      \abs{\grad v_{i+1}}^{p}
      \di\ew
      \di\tau
    \\
    &\quad+
      \frac{p-d}{p}b^{i}
      \Psi
      \eps^{-\frac{p}{p-d}}
      \CH^{\frac{pd}{p-d}}
      t
      \sup_{\tau_{\infty}<\tau<t}
      \msw(\tau,h_{\infty})^{1+\frac{d(q-p)}{q(p-d)}}
      \,,
  \end{split}
\end{equation}
for $b$ and $\Psi$ as in \eqref{eq:sur_i}.
Here we select $\eps$ as follows, for an $\eps_{1}>0$ to be fixed presently:
\begin{equation*}
  \frac{d}{p}b^{i}
  \Psi
  \eps^{\frac{p}{d}}
  =
  \eps_{1}
  \,.
\end{equation*}
Collecting \eqref{eq:sur_i} and \eqref{eq:sur_iii} we arrive at
\begin{equation}
  \label{eq:sur_iv}
  \begin{split}
    &\sup_{t_{i}<\tau<t}
      \int_{\RN}
      v_{i}^{d}
      \di\ew
      +
      \int_{t_{i}}^{t}
      \int_{\RN}
      \abs{\grad v_{i}}^{p}
      \di\ew\di \tau
      \le
      \eps_{1}
      \int_{t_{i+1}}^{t}
      \int_{\RN}
      \abs{\grad v_{i+1}}^{p}
      \di\ew
      \di\tau
    \\
    &\quad
      +
      b^{i\frac{p}{p-d}}
      \Psi^{\frac{p}{p-d}}
      \eps_{1}^{-\frac{d}{p-d}}
      \CH^{\frac{pd}{p-d}}
      t
      \sup_{\tau_{\infty}<\tau<t}
      \msw(\tau,h_{\infty})^{1+\frac{d(q-p)}{q(p-d)}}
      \,;
  \end{split}
\end{equation}
we omitted here from the last term the multiplicative constant $(p-d)d^{d/(p-d)}/p^{p/(p-d)}<1$.
\\
On iterating \eqref{eq:sur_iv} on $i$ (see e.g., \cite[(3.6)--(3.9)]{Andreucci:Tedeev:2021b}) we find for small enough $\eps_{1}$, e.g., $\eps_{1}=1/(2b^{p/(p-d)})$,
\begin{equation}
  \label{eq:sur_j}
  \begin{split}
    &\sup_{\tau_{0}<\tau<t}
      \int_{\RN}
      \ppos{u(\tau)-h_{0}}^{s}
      \di\ew
      \le
      c
      \Big(
      \frac{h_{0}}{h_{0}-h_{\infty}}
      \Big)^{\abs{m-1}\frac{p}{p-d}}
    \\
    &\quad\times
      (\tau_{0}-\tau_{\infty})^{-\frac{p}{p-d}}
      \CH^{\frac{pd}{p-d}}
      t
      \sup_{\tau_{\infty}<\tau<t}
      \msw(\tau,h_{\infty})^{1+\frac{d(q-p)}{q(p-d)}}
      \,.
  \end{split}
\end{equation}
We are going to apply \eqref{eq:sur_j} with $\tau_{0}=t_{n+1}'$, $\tau_{\infty}=t_{n}'$, $h_{0}=\bar\ell_{n}$, $h_{\infty}=\ell_{n}$, where for a $k>0$ to be chosen
\begin{equation*}
  \ell_{n}
  =
  k(1-2^{-n-1})
  \,,
  \quad
  \bar\ell_{n}
  =
  (\ell_{n}+\ell_{n+1})/2
  \,,
  \quad
  t_{n}'
  =
  t(1-2^{n-1})
  \,,
  \quad
  n\ge 0
  \,.
\end{equation*}
However we first note that by Chebyshev inequality
\begin{equation}
  \label{eq:sur_jj}
  \begin{split}
    Y_{n+1}
    &:=
    \sup_{t_{n+1}'<\tau<t}
    \msw(\tau,\ell_{n+1})
    \le
      \frac{1}{(\ell_{n+1}-\bar\ell_{n})^{s}}
      \sup_{t_{n+1}'<\tau<t}
      \int_{\RN}
      \ppos{u(\tau)-\bar\ell_{n}}^{s}
      \di\ew
      \\
    &=
      2^{(n+3)s}
      k^{-s}
      \sup_{t_{n+1}'<\tau<t}
      \int_{\RN}
      \ppos{u(\tau)-\bar\ell_{n}}^{s}
      \di\ew
      \,.
  \end{split}
\end{equation}
Then from \eqref{eq:sur_j}, \eqref{eq:sur_jj} we obtain
\begin{equation}
  \label{eq:sur_jjj}
    Y_{n+1}
    \le
      c
      b_{1}^{n}
      \CH^{\frac{pd}{p-d}}
      t^{-\frac{d}{p-d}}
      k^{-s}
      Y_{n}^{1+\frac{d(q-p)}{q(p-d)}}
      \,,
\end{equation}
where $b_{1}=2^{s+(1+\abs{m-1})p/(p-d)}$. Then from the iteration result \cite[Lemma~5.6, Chapter~II]{LSU} we infer that $Y_{n}\to0$ as $n\to+\infty$, provided
\begin{equation}
  \label{eq:sur_jv}
  k^{-1}
  t^{-\frac{1}{p+m-3}}
  \CH^{\frac{p}{p+m-3}}
  Y_{0}^{\frac{q-p}{q(p+m-3)}}
  \le
  c_{0}(N,m,p,s)
  \,,
\end{equation}
for a suitable $c_{0}>0$. Note that this implies $\norma{u(t)}{\infty}\le k$.
\\
But, again by Chebyshev inequality for any $r>1$
\begin{equation*}
  Y_{0}
  =
  \sup_{t/2<\tau<t}
  \msw(\tau,k/2)
  \le
  \frac{4^{r}}{k^{r}}
  \sup_{t/2<\tau<t}
  \int_{\RN}
  u(\tau)^{r}
  \di \ew
  \,.
\end{equation*}
The number $r$, as well as $q$, will be chosen below.
Next we select $k$ from
\begin{equation}
  \label{eq:sur_k}
  k^{-1}
  t^{-\frac{1}{p+m-3}}
  \CH^{\frac{p}{p+m-3}}
  \Big[
  \frac{4^{r}}{k^{r}}
  \sup_{t/2<\tau<t}
  \int_{\RN}
  u(\tau)^{r}
  \di \ew
  \Big]^{\frac{q-p}{q(p+m-3)}}
  =
  \frac{c_{0}}{2}
  \,.
\end{equation}
Then we get from some elementary algebra
\begin{equation}
  \label{eq:sur_kk}
    \norma{u(t)}{\infty}
    \le
      c
      t^{-\frac{q}{\Har}}
      \CH^{\frac{pq}{\Har}}
      \Big[
      4^{r}
      \sup_{t/2<\tau<t}
      \int_{\RN}
      u(\tau)^{r}
      \di \ew
      \Big]^{\frac{q-p}{\Har}}
    \,,
\end{equation}
where $\Har=q(p+m-3)+r(q-p)$. 
We have used that $q(p+m-3)/\Har\le 1$ to bound the constant $c$.

\textbf{Second step.}
As a second step in the proof of the Theorem, we find a suitably sharp integral estimate of $u^{r}$.
\\
To this end we introduce the notation
\begin{equation*}
  v
  =
  u^{\frac{r}{\lambda}}
  \,,
  \quad
  p>
  \lambda
  :=
  \frac{pr}{p+m+r-3}
  >
  \eta
  :=
  \frac{p}{p+m+r-3}
  \,;
\end{equation*}
note that $v^{\lambda}=u^{r}$, $v^{\eta}=u$.
Then we have from the differential equation \eqref{eq:pde} for $u$
\begin{equation}
  \label{eq:sur_u}
  \der{}{t}
  \int_{\RN}
  v^{\lambda}
  \di\ew
  =
  -
  r(r-1)
  \eta^{p}
  \int_{\RN}
  \abs{\grad v}^{p}
  \di\ew
  \,.
\end{equation}
On applying in turn H\"older inequality, conservation of mass and our assumption $\norma{u_{0}\ew}{1}=1$, and finally Lemma~\ref{l:aux_hardy}, we obtain
\begin{equation}
  \label{eq:sur_uu}
  \begin{split}
  \int_{\RN}
  v^{\lambda}
  \di\ew
  &\le
  \Big(
  \int_{\RN}
  v^{q}
  \di\ew
  \Big)^{\frac{\lambda-\eta}{q-\eta}}
  \Big(
  \int_{\RN}
  v^{\eta}
  \di\ew
  \Big)^{\frac{q-\lambda}{q-\eta}}
  \\
    &\le
      \CH^{q\frac{\lambda-\eta}{q-\eta}}
    \Big(
      \int_{\RN}
      \abs{\grad v}^{p}
      \di\ew
      \Big)^{\frac{q}{p}\,\frac{\lambda-\eta}{q-\eta}}
      \,.
  \end{split}
\end{equation}
From \eqref{eq:sur_u} and \eqref{eq:sur_uu} we get
\begin{equation}
  \label{eq:sur_uuu}
  \der{}{t}
  \int_{\RN}
  v^{\lambda}
  \di\ew
  \le
  -
  r(r-1)
  \eta^{p}
  \CH^{-p}
  \Big(
  \int_{\RN}
  v^{\lambda}
  \di\ew
  \Big)^{1+\omega}
  \,,
\end{equation}
where
\begin{equation*}
  1+\omega
  =
  \frac{p}{q}\,\frac{q-\eta}{\lambda-\eta}
  \,,
  \quad
  \text{i.e.,}
  \quad
  \omega
  =
  \frac{q(p+m-3)+q-p}{q(r-1)}
  >0
  \,.
\end{equation*}
Then, on integrating \eqref{eq:sur_uuu} on $(t/2,t)$ we obtain
\begin{equation}
  \label{eq:sur_x}
  \int_{\RN}
  v(t)^{\lambda}
  \di\ew
  \le
  \Big(
  \frac{2\CH^{p}}{\omega   r(r-1)
  \eta^{p}}
  \Big)^{\frac{1}{\omega}}
  t^{-\frac{1}{\omega}}
  \,.
\end{equation}
On applying \eqref{eq:sur_x} in \eqref{eq:sur_kk} we conclude
\begin{equation*}
  \norma{u(t)}{\infty}
  \le
  c
  t^{-\frac{q}{\Har}}
  \CH^{\frac{pq}{\Har}}
  \Big[
  4^{r}
  \Big(
  \frac{4\CH^{p}}{\omega   r(r-1)
    \eta^{p}}
  \Big)^{\frac{1}{\omega}}
  t^{-\frac{1}{\omega}}
  \Big]^{\frac{q-p}{\Har}}
  \,,
\end{equation*}
amounting to, via some algebra,
\begin{equation}
  \label{eq:sur_xx}
  \norma{u(t)}{\infty}
  \le
  c
  4^{\frac{r(q-p)}{\Har}}
  \Big[
  \frac{4}{\omega   r(r-1)
    \eta^{p}}
  \Big]^{\frac{q-p}{\omega\Har}}
  \CH^{\frac{pq}{q(p+m-3)+q-p}}
  t^{-\frac{q}{q(p+m-3)+q-p}}
  \,.
\end{equation}

\textbf{Third step.}
In the third and last step of the proof we select suitably the numbers $q$ and $r$, actually according to the constraint
\begin{equation}
  \label{eq:sur_y}
  r(q-p)
  =
  1
  \,.
\end{equation}
Note that $q$ belongs to the admissible range $(p,p^{*})$, since $r>1$ by assumption. In fact we are going to let $r\to+\infty$. On denoting
\begin{equation*}
  \sigma
  :=
  \frac{q}{p}
  \,,
  \quad
  \text{so that}
  \quad
  a
  =
  \frac{pq}{q-p}
  =
  \frac{q}{\sigma -1}
  \,,
\end{equation*}
as $r\to+\infty$ we have $q\to p+$ and $\sigma\to 1+$, $a\to+\infty$.
\\
Next we obtain a meaningful bound for $\CH$ in this limit;
according to the definitions above, we have $K(q,p)\le c$, and, also on invoking \eqref{eq:aux_powerlike_inv}, $\exw^{(-1)}(a)/a\to +\infty$. Thus for large $r$ we may take $r_{0}=1$ in \eqref{eq:aux_hardy_mm} and estimate
\begin{equation}
  \label{eq:sur_yy}
  \CH
  \le
  c
  \frac{\exw^{(-1)}(a)}{a}
  =
  c
  \exw^{(-1)}\Big(
  \frac{q}{\sigma-1}
  \Big)
  \frac{\sigma-1}{q}
  \le
  c
  \exw^{(-1)}\Big(
  \frac{1}{\sigma-1}
  \Big)
  (\sigma-1)
  \,,
\end{equation}
where in last inequality we used \eqref{eq:aux_powerlike_inv} again. We also remark that $\Har\to p(p+m-3)+1=:H$ and
\begin{gather}
  \label{eq:sur_yyy}
  \frac{r(q-p)}{\Har}
  \le
  \frac{1}{H}
  <
  1
  \,,
  \\
  \label{eq:sur_yv}  
  \frac{q-p}{\omega\Har}
  \le
  \frac{1}{p+m-3}
  \,,
  \\
  \label{eq:sur_vy}  
  \omega
  r(r-1)
  \eta^{p}
  \ge
  r^{-p+1}
  \frac{p^{p}(p+m-3)}{(p+m-2)^{p}}
  \,.
\end{gather}
Next we deal with the exponents of $t$ and of $\CH$ in \eqref{eq:sur_xx}; the exponent of $t$ can be bounded as in
\begin{equation}
  \label{eq:sur_vyy}
  \begin{split}
    &-
    \frac{q}{q(p+m-3)+q-p}
    \\
    &\quad=
    -
    \frac{1}{p+m-3}
    +
    \frac{q-p}{(p+m-3)[q(p+m-3)+q-p]}
    \\
    &\quad\le
    -
    \frac{1}{p+m-3}
    +
    \frac{q-p}{p(p+m-3)^{2}}
    \,.
  \end{split}
\end{equation}
In turn the exponent of $\CH$ can be majorized as in
\begin{equation}
  \label{eq:sur_vyyy}
  \frac{pq}{q(p+m-3)+q-p}
  \le
  \frac{p}{p+m-3}
  \,.
\end{equation}
Thus, on gathering \eqref{eq:sur_xx}--\eqref{eq:sur_vyyy} we obtain
\begin{equation}
  \label{eq:sur_z}
  \norma{u(t)}{\infty}
  \le
  c
  \Big[
  r^{p-1}
  \exw^{(-1)}\Big(
  \frac{1}{\sigma-1}
  \Big)^{p}
  (\sigma-1)^{p}
  \Big]^{\frac{1}{p+m-3}}
  t^{-\frac{1}{p+m-3}}
  t^{\frac{q-p}{p(p+m-3)^{2}}}
  \,.
\end{equation}
Finally we select for $t>e$, according to the ``logarithmic trick'' of \cite{Grillo:Muratori:2016},
\begin{equation*}
  r=\log t
  \,,
  \quad
  \text{so that}
  \quad
  \sigma
  =
  \frac{q}{p}
  =
  1
  +
  \frac{1}{p\log t}
  \,,
  \quad
  t^{q-p}
  =
  e
  \,,
\end{equation*}
and assuming that $t$ is so large that \eqref{eq:sur_yy} is valid,
\eqref{eq:sur_z} yields
\begin{equation}
  \label{eq:sur_zz}
  \begin{split}
    \norma{u(t)}{\infty}
    &\le
      c
      \Big[
      (\log t)^{p-1}
      \frac{
      \exw^{(-1)}(
      p\log t
      )^{p}
      }{
      (p\log t)^{p}
      }
      \Big]^{\frac{1}{p+m-3}}
      t^{-\frac{1}{p+m-3}}
      \\
    &\le
      c
      \Big[
      \frac{
      \exw^{(-1)}(
      \log t
      )^{p}
      }{
      \log t
      }
      \Big]^{\frac{1}{p+m-3}}
      t^{-\frac{1}{p+m-3}}
      \,,
  \end{split}
\end{equation}
where we used again \eqref{eq:aux_powerlike_inv}. The sought after estimate \eqref{eq:sur_nn} is proved.

\section{Proof of Theorem~\ref{t:bds}}
\label{s:bds}

We begin by remarking that \eqref{eq:aux_bdd_n} formally is the same as \eqref{eq:aux_hardy_m}, if we let $\CH=C\fkf(R)^{-1+N/a}$, excepting the fact that the latter applies to radial functions and the former to functions supported in $B_{R}$. Since in this proof we deal with a solution whose support (up to time $t$) is contained in $B_{R}$, we may repeat the proof of Theorem~\ref{t:sur} given in Section~\ref{s:sur} replacing \eqref{eq:aux_hardy_m} with \eqref{eq:aux_bdd_n} in each instance of its use. In this proof we take $R=c\exw^{(-1)}(\log t)$ for large $t$, according to \eqref{eq:fsp_n}; note that we still assume here $\norma{u_{0}\ew}{1}=1$. Here $c$ denotes a constant depending on $N$,  $p$, $m$, $\alpha_{1}$, $\alpha_{2}$, $\exw(1)$.

Therefore, with the formal replacement indicated above, we may immediately rewrite \eqref{eq:sur_xx} as
\begin{equation}
  \label{eq:bds_xx}
  \norma{u(t)}{\infty}
  \le
  c
  4^{\frac{r(q-p)}{\Har}}
  \Big[
  \frac{4}{\omega   r(r-1)
    \eta^{p}}
  \Big]^{\frac{q-p}{\omega\Har}}
  \fkf(R)^{-\frac{pq-N(q-p)}{q(p+m-3)+q-p}}
  t^{-\frac{q}{q(p+m-3)+q-p}}
  \,.
\end{equation}
Next we reason for $R$ large enough, i.e., for $t$ large enough, to imply $\fkf(R)<1$. Then we may bound in \eqref{eq:bds_xx}
\begin{equation}
  \label{eq:bds_i}
  \fkf(R)^{-\frac{pq-N(q-p)}{q(p+m-3)+q-p}}
  \le
  c
  \fkf(\exw^{(-1)}(\log t))^{-\frac{p}{p+m-3}}
  \,.
\end{equation}
On appealing again to \eqref{eq:sur_yyy}--\eqref{eq:sur_vyy} we get in this way, on selecting $q$ and $r$ by \eqref{eq:sur_y} and $r=\log t$ 
\begin{equation}
  \label{eq:bds_j}
  \begin{split}
    \norma{u(t)}{\infty}
    &\le
    c
    r^{\frac{p-1}{p+m-3}}
    \fkf(\exw^{(-1)}(\log t))^{-\frac{p}{p+m-3}}
    t^{-\frac{1}{p+m-3}}
    t^{\frac{q-p}{p(p+m-3)^{2}}}
    \\
    &\le
    c
    (\log t)^{\frac{p-1}{p+m-3}}
    \fkf(\exw^{(-1)}(\log t))^{-\frac{p}{p+m-3}}
    t^{-\frac{1}{p+m-3}}
    \,.
  \end{split}
\end{equation}
Finally note that by means of \eqref{eq:aux_emf_prpos} we may bound
\begin{equation}
  \label{eq:bds_jj}
  \fkf(\exw^{(-1)}(\log t))^{-1}
  =
  \excf'(\exw^{(-1)}(\log t))^{-1}
  \le
  \frac{\alpha_{1}+1}{\alpha_{1}}
  \frac{\exw^{(-1)}(\log t)}{\log t}
  \,.
\end{equation}
On applying \eqref{eq:bds_jj} in \eqref{eq:bds_j} we immediately infer \eqref{eq:sur_nn}.

\section{Proof of Theorem~\ref{t:fsp}}
\label{s:fsp}

For given $\eta$, $\sigma\in(0,1/4]$, $R\ge 4R_{0}$, we set
\begin{gather*}
  R_{n}'
  =
  \frac{R}{2}
  (1-\eta-\sigma+\sigma 2^{-n})
  \,,
  \quad
  R_{n}''
  =
  R
  (1+\eta+\sigma-\sigma 2^{-n})
  \,,
  \\
  A_{n}
  =
  \{
  x\in\RN
  \mid
  R_{n}'<\abs{x}<R_{n}''
  \}
  \subset
  A_{n+1}
  \,,
  \quad
  n\ge 0
  \,,
  \\
  A_{\infty}
  =
  \Big\{
  x\in\RN
  \mid
  \frac{R}{2}
  (1-\eta-\sigma)
  <\abs{x}<
  R
  (1+\eta+\sigma)
  \Big\}
  \,.
\end{gather*}
We also consider a sequence of cutoff functions $\zeta_{n}$ satisfying
\begin{gather*}
  0\le \zeta_{n}\le 1
  \,;
  \qquad
  \abs{\grad \zeta_{n}}\le\frac{2^{n+3}}{\sigma R}
  \,;
  \\
  \zeta_{n}(x)=1
  \,,
  \quad
  x\in A_{n}
  \,;
  \qquad
  \zeta_{n}(x)=0
  \,,
  \quad
  x\not\in A_{n+1}
  \,.
\end{gather*}
We have the following routine energy-type inequality
\begin{equation}
  \label{eq:fsp_i}
  \begin{split}
    J_{n}
    &:=
      \sup_{0<\tau<t}
      \int_{\RN}
      v_{n}^{d}
      \di\ew
      +
      \int_{0}^{t}
      \int_{\RN}
      \abs{\grad v_{n}}^{p}
      \di\ew
      \di\tau
    \\
    &\le
      c
      \frac{2^{np}}{\sigma^{p}R^{p}}
      \int_{0}^{t}
      \int_{\RN}
      v_{n+1}^{p}
      \di\ew
      \di\tau
      =:
      L_{n+1}
      \,,
  \end{split}
\end{equation}
where $s>1$, $d=s/\theta$, $\theta=(p+m+s-3)/p$, $v_{n}=(u\zeta_{n})^{\theta}$ and $c$ denotes a generic constant depending on $N$, $p$, $m$, $\alpha_{1}$, $\alpha_{2}$, $s$. In \eqref{eq:fsp_i} of course we used the fact that $\supp u_{0}\cap A_{n}=\emptyset$. The number $s(p,m,\alpha_{2})>1$ will be chosen below close enough to $1$. 

We recall the standard Gagliardo-Nirenberg inequality, valid for example for smooth compactly supported functions,
\begin{equation}
  \label{eq:fsp_ii}
  \int_{\RN}
  \abs{w}^{p}
  \di x
  \le
  c
  \Big(
  \int_{\RN}
  \abs{\grad w}^{p}
  \di x
  \Big)^{\xi}
  \Big(
  \int_{\RN}
  \abs{w}^{\eps}
  \di x
  \Big)^{\frac{p(1-\xi)}{\eps}}
  \,,
\end{equation}
where $\eps\in(0,p)$ will be chosen below as $\eps(p,m,s)$ and $\xi\in(0,1)$ is defined by
\begin{equation*}
  \frac{N}{p}
  =
  \frac{\xi(N-p)}{p}
  +
  \frac{N(1-\xi)}{\eps}
  \,,
  \quad
  \text{i.e.,}
  \quad
  \xi
  =
  \frac{N(p-\eps)}{N(p-\eps)+p\eps}
  \,.
\end{equation*}
We apply \eqref{eq:fsp_ii} with $w=v_{n+1}\ew^{1/p}$, obtaining
\begin{equation}
  \label{eq:fsp_iii}
    \int_{\RN}
    \abs{v_{n+1}}^{p}
    \ew
    \di x
    \le
    c
    \Big(
    \int_{\RN}
    \abs{\grad (v_{n+1}\ew^{\frac{1}{p}})}^{p}
    \di x
    \Big)^{\xi}
    \Big(
    \int_{\RN}
    \abs{v_{n+1}}^{\eps}
      \ew^{\frac{\eps}{p}}
    \di x
    \Big)^{\frac{p(1-\xi)}{\eps}}
    \,.
\end{equation}
Note that $\abs{\grad g(\abs{x})}\le c \fkf(\abs{x})$ by \eqref{eq:aux_emf_prpos}, so that
\begin{equation}
  \label{eq:fsp_iv}
  \begin{split}
    \int_{\RN}
    \abs{\grad (v_{n+1}\ew^{\frac{1}{p}})}^{p}
    \di x
    &\le
      c
      \int_{\RN}
      (
      \abs{\grad v_{n+1}}^{p}
      \ew
      +
      v_{n+1}^{p}
      \abs{\grad \exw}^{p}
      \ew
      )
      \di x
    \\
    &\le
      c
      \int_{\RN}
      (
      \abs{\grad v_{n+1}}^{p}
      \ew
      +
      v_{n+1}^{p}
      \fkf(\abs{x})^{p}
      \ew
      )
      \di x
    \\
    &\le
      c
      \int_{\RN}
      \abs{\grad v_{n+1}}^{p}
      \ew
      \di x
      \,,
  \end{split}
\end{equation}
where in last inequality we applied \eqref{eq:aux_poinc_n}. Then from \eqref{eq:fsp_iii}, \eqref{eq:fsp_iv} we infer
\begin{equation}
  \label{eq:fsp_vvv}
  \begin{split}
  \int_{\RN}
    \abs{v_{n+1}}^{p}
    \ew
    \di x
    &\le
    c
    \Big(
    \int_{\RN}
    \abs{\grad v_{n+1}}^{p}
      \ew
    \di x
    \Big)^{\xi}
    \\
    &\quad\times
      \Big(
    \int_{\RN}
    \abs{v_{n+1}}^{\eps}
      \ew
    \di x
    \Big)^{\frac{p(1-\xi)}{\eps}}
      \ew\Big(
      \frac{R}{4}
      \Big)^{-\frac{(p-\eps)(1-\xi)}{\eps}}
    \,.
  \end{split}
\end{equation}
We have exploited the fact that $\supp v_{n+1}\subset \{\abs{x}\ge R/4\}$. Here we select
\begin{equation*}
  \eps
  =
  \frac{1}{\theta}
  <
  p
  \,,
  \quad
  \text{so that}
  \quad
  \xi
  =
  \frac{N(p+m+s-4)}{N(p+m+s-4)+p}
  \,.
\end{equation*}
Next we apply Young inequality in \eqref{eq:fsp_vvv}, and integrate in time, obtaining for $\delta>0$
\begin{equation}
  \label{eq:fsp_vi}
  \begin{split}
    L_{n+1}
    &\le
      \delta
      \int_{0}^{t}
      \int_{\RN}
      \abs{\grad v_{n+1}}^{p}
      \ew
      \di x
      \di\tau
      \\
    &\quad+
      b^{n}
      \frac{
      c
      \delta^{-\frac{1}{p}(N+p+m+s-4)}
      t
      }{
      (\sigma R)^{\Kas}
      \ew(R/4)^{p+m+s-4}
      }
      \Big(
      \sup_{0<\tau<t}
      \int_{A_{\infty}}
      u
      \ew
      \di x
      \Big)^{p+m+s-3}
      \,,
  \end{split}
\end{equation}
where $b=2^{p/(1-\xi)}$, $\Kas=N(p+m+s-4)+p$. From \eqref{eq:fsp_i} and \eqref{eq:fsp_vi} we get
\begin{equation}
  \label{eq:fsp_vii}
  J_{n}
  \le
  \delta
  J_{n+1}
  +
  b^{n}
  \frac{
    c
    \delta^{-\frac{1}{p}(N+p+m+s-4)}
    t
  }{
    (\sigma R)^{\Kas}
    \ew(R/4)^{p+m+s-4}
  }
  \Big(
  \sup_{0<\tau<t}
  \int_{A_{\infty}}
  u
  \ew
  \di x
  \Big)^{p+m+s-3}
  \,,
\end{equation}
to which we may apply the same iteration scheme which lead us from \eqref{eq:sur_iv} to \eqref{eq:sur_j}, arriving at
\begin{equation}
  \label{eq:fsp_j}
  \sup_{0<\tau<t}
  \int_{A_{0}}
  u^{s}
  \di\ew
  \le
  \frac{
    c
    t
  }{
    (\sigma R)^{\Kas}
    \ew(R/4)^{p+m+s-4}
  }
  \Big(
  \sup_{0<\tau<t}
  \int_{A_{\infty}}
  u
  \di\ew
  \Big)^{p+m+s-3}
  \,.
\end{equation}
Next we select $\eta=\sigma=2^{-i-2}$, $i\ge 0$, so that on defining
\begin{gather*}
  \bar R_{i}'
  =
  \frac{R}{2}
  (1- 2^{-i-2})
  \,,
  \quad
  \bar R_{i}''
  =
  R
  (1+ 2^{-i-2})
  \,,
  \\
  D_{i}
  =
  \{
  x\in\RN
  \mid
  \bar R_{i}'<\abs{x}<\bar R_{i}''
  \}
  \supset
  D_{i+1}
  \,,
  \quad
  i\ge 0
  \,,
  \\
  D_{\infty}
  =
  \Big\{
  x\in\RN
  \mid
  \frac{R}{2}
  <\abs{x}<
  R
  \Big\}
  \,,
\end{gather*}
the inequality \eqref{eq:fsp_j} yields
\begin{equation}
  \label{eq:fsp_jj}
  \sup_{0<\tau<t}
  \int_{D_{i+1}}
  u^{s}
  \di\ew
  \le
  \frac{
    c
    2^{i\Kas}
    t
  }{
    R^{\Kas}
    \ew(R/4)^{p+m+s-4}
  }
  Y_{i}^{p+m+s-3}
\end{equation}
where we also let
\begin{equation*}
  Y_{i}
  =
  \sup_{0<\tau<t}
  \int_{D_{i}}
  u
  \di\ew
  \,.
\end{equation*}
Note that
\begin{equation*}
  \int_{D_{i+1}}
  \di\ew
  \le
  c R^{N}
  \ew(2R)
  \,.
\end{equation*}
Then we apply H\"older inequality and \eqref{eq:fsp_jj}, to estimate
\begin{equation}
  \label{eq:fsp_jjj}
  \begin{split}
    Y_{i+1}
    &\le
      \Big(
      \sup_{0<\tau<t}
      \int_{D_{i+1}}
      u^{s}
      \di\ew
      \Big)^{\frac{1}{s}}
      \Big(
      \int_{D_{i+1}}
      \di\ew
      \Big)^{\frac{s-1}{s}}
    \\
    &\le
      c
      2^{\frac{\Kas}{s}i}
      \Big(
        \frac{
      t
      }{
      R^{\Ka}
      \ew(R/4)^{p+m-3}
      }
      Y_{i}^{p+m-3}
      \Big)^{\frac{1}{s}}
      \Big(
      \frac{\ew(2R)}{\ew(R/4)}
      \Big)^{\frac{s-1}{s}}
      Y_{i}
      \,,
  \end{split}
\end{equation}
where $\Ka=N(p+m-3)+p$.
\\
Thus, on invoking  \cite[Lemma~5.6, Chapter~II]{LSU} we have that $Y_{i}\to 0$ as $i\to+\infty$, provided
\begin{equation}
  \label{eq:fsp_jv}
  \frac{
    t
  }{
    R^{\Ka}
    \ew(R/4)^{p+m-3}
  }
  Y_{0}^{p+m-3}
  \Big(
  \frac{\ew(2R)}{\ew(R/4)}
  \Big)^{s-1}
  \le
  c_{0}
\end{equation}
for a suitable $c_{0}=c_{0}(N,p,m,s)<1$. This limiting relation of course implies that $u(t)=0$ in $D_{\infty}$, i.e., boundedness of support. In turn, \eqref{eq:fsp_jv} is implied, owing to conservation of mass, by
\begin{equation}
  \label{eq:fsp_v}
  \frac{
    t
  }{
    \ew(R/4)^{p+m-3}
  }
  \norma{u_{0}\ew}{1}^{p+m-3}
  \ew(2R)^{s-1}
  \le
  c_{0}
  \,,
\end{equation}
where we have dropped the non-essential factors $\ew(R/4)^{1-s}<1$ (due to the exponential character of $\ew$) and $R^{-\Ka}<1$ (due to the fact that we are going to assume $R\ge 1$). Then we remark that, owing to \eqref{eq:aux_non_non},
\begin{equation}
  \label{eq:fsp_vj}
  \begin{split}
    \frac{\ew(2R)^{s-1}}
    {\ew(R/4)^{p+m-3}}
    &=
      \exp\big(
      (s-1)\exw(2R)
      -
      (p+m-3)\exw(R/4)
      \big)
    \\
    &\le
      \exp\big(
      [(s-1)2^{\alpha_{2}}
      -
      (p+m-3)4^{-\alpha_{2}}
      ]
      \exw(R)
      \big)
      \,.
  \end{split}
\end{equation}
Thus we select for example
\begin{gather*}
  s
  =
  1
  +
  \frac{1}{2}
  8^{-\alpha_{2}}
  (p+m-3)
  >1
  \,,
  \\
  \text{and let}
  \quad
  \nu
  =
  -[
  (s-1)2^{\alpha_{2}}
  -
  (p+m-3)4^{-\alpha_{2}}
  ]
  >0
  \,.
\end{gather*}
Hence \eqref{eq:fsp_v} is implied by
\begin{equation}
  \label{eq:fsp_vjj}
  e^{-\nu \exw(R)}
  t
  \norma{u_{0}\ew}{1}^{p+m-3}
  \le
  c_{0}
  \,,
\end{equation}
that is by, for a suitable $c\ge \exw(1)$, $\exw(4R_{0})$ (so that $R\ge 1$, $4R_{0}$),
\begin{equation}
  \label{eq:fsp_vjjj}
  \begin{split}
    \exw(R)
    &\ge
    c
    \log(e+  t
    \norma{u_{0}\ew}{1}^{p+m-3}
    )
      \\
    &\ge
    \frac{1}{\nu}
    \log(  t
    \norma{u_{0}\ew}{1}^{p+m-3}
    )
    +
    \frac{1}{\nu}
    \log\frac{1}{c_{0}}
    \,,
  \end{split}
\end{equation}
which amounts to \eqref{eq:fsp_n}, after an application of \eqref{eq:aux_powerlike_inv}.



\def\cprime{$'$}

\end{document}